\documentclass[preprint,1p,10pt]{elsarticle}

\journal{Arxiv}

\biboptions{sort&compress,comma,round}
\usepackage{stmaryrd}
%% The graphicx package provides the includegraphics command.
\usepackage{graphicx}
%% The amssymb package provides various useful mathematical symbols
\usepackage{amssymb}
%% The amsthm package provides extended theorem environments
%% \usepackage{amsthm}
\usepackage{bm}
\usepackage[FIGTOPCAP]{subfigure}
\usepackage{amsmath}
\usepackage{float}
\DeclareSymbolFont{largesymbolsA}{U}{txexa}{m}{n}
\usepackage{setspace}
\usepackage{graphics}
\usepackage{amssymb, latexsym, amsmath, epsfig}
\usepackage{graphicx}
\usepackage{epstopdf}
\usepackage{comment}
\usepackage[normalem]{ulem}
\usepackage{float}
\usepackage{amssymb,amsmath}
\usepackage{amsfonts,amstext,amsthm,amssymb,comment}
\usepackage{epsfig,graphics,color}

\graphicspath{{figures/}}

\newtheorem{theorem}{Theorem}

\numberwithin{equation}{section}

\newtheorem{remark}{Remark}

\newcommand{\tr}{\text{tr }}

\newcommand{\del}[1]{\llbracket #1 \rrbracket}

\newcommand{\bfs}[1]{{\boldsymbol #1}}

\newcommand{\Question}[1]{\marginpar{}}
\renewcommand{\Question}[1]{%
  \marginpar{\flushleft\scriptsize\bfseries\upshape#1}}

%%   round  -  round parentheses are used (default)
%%   square -  square brackets are used   [option]
%%   curly  -  curly braces are used      {option}
%%   angle  -  angle brackets are used    <option>
%%   semicolon  -  multiple citations separated by semi-colon
%%   colon  - same as semicolon, an earlier confusion
%%   comma  -  separated by comma
%%   numbers-  selects numerical citations
%%   super  -  numerical citations as superscripts
%%   sort   -  sorts multiple citations according to order in ref. list
%%   sort&compress   -  like sort, but also compresses numerical citations
%%   compress - compresses without sorting
%%
%%	\biboptions{comma,round}
%%	\biboptions{}
%\biboptions{sort&compress,comma,round}

\begin{document}
	
\begin{frontmatter}
		
  %% Title, authors and addresses
		
  \title{Higher-order generalized-$\alpha$ methods for parabolic problems}
  
  \author[ad1]{Pouria Behnoudfar\corref{corr}}
  \ead{pouria.behnoudfar@postgrad.curtin.edu.au}
  \cortext[corr]{Corresponding author}
  \author[ad2]{Quanling Deng}
  \ead{Quanling.Deng@math.wisc.edu}
  \author[ad3]{Victor M. Calo}
  \ead{Victor.Calo@curtin.edu.au}
  \address[ad1]{Curtin Institute for Computation \& School of Earth and Planetary Sciences, Curtin University, Kent Street, Bentley, Perth, WA 6102, Australia}
  \address[ad2]{Department of Mathematics, University of Wisconsin-Madison, Madison, WI 53706, USA.}
  \address[ad3]{Curtin Institute for Computation \& School of Electrical
    Engineering, Computing and Mathematical Sciences, Curtin
    University, P.O. Box U1987, Perth, WA 6845, Australia}

  \begin{abstract}
    We propose a new class of high-order time-marching schemes with dissipation user-control and unconditional stability for parabolic equations. High-order time integrators can deliver the optimal performance of highly-accurate and robust spatial discretizations such as isogeometric analysis. The generalized-$\alpha$ method delivers unconditional stability and second-order accuracy in time and controls the numerical dissipation in the discrete spectrum's high-frequency region. Our goal is to extend the generalized-$alpha$ methodology to obtain a high-order time marching methods with high accuracy and dissipation in the discrete high-frequency range. Furthermore, we maintain the stability region of the original, second-order generalized-$alpha$ method foe the new higher-order methods. That is, we increase the accuracy of the generalized-$\alpha$ method while keeping the unconditional stability and user-control features on the high-frequency numerical dissipation. The methodology solve $k>1, k\in \mathbb{N}$ matrix problems and updates the system unknowns, which correspond to higher-order terms in Taylor expansions to obtain $(3/2k)^{th}$-order method for even $k$ and $(3/2k+1/2)^{th}$-order for odd $k$. A single parameter $\rho^\infty$ controls the dissipation, and the update procedure follows the formulation of the original second-order method.  Additionally, we show that our method is A-stable and setting $\rho^\infty=0$ allows us to obtain an L-stable method. Lastly, we extend this strategy to analyze the accuracy order of a generic method. 
  \end{abstract}
		
  \begin{keyword}
    generalized-$\alpha$ method \sep spectrum analysis \sep parabolic equation \sep dissipation control \sep stability analysis
			
  \end{keyword}
		
\end{frontmatter}
	
\vspace{0.5cm}
	
%% main text
\section{Introduction}
\citet{ jansen2000generalized} introduced the generalized-$\alpha$ method for parabolic problems as a time-marching scheme, with second-order accuracy and unconditional stability. Furthermore, the methodology allows the user to control the numerical dissipation in the high-frequency region. That is, the generalized-$\alpha$ method controls the high- and low-frequency dissipations in the sense that for a given high-frequency dissipation, the algorithm minimizes the low-frequency dissipation. Thus, one obtains accurate approximations in both low- and high- frequency regions; see~\cite{ jansen2000generalized, chung1993time}. Despite these features, to date, the generalized-$\alpha$ method is limited to second-order accuracy in time while the high-order Runge-Kutta and multistep schemes (e.g., Adams-Moulton, and backward differentiation formulae (BDF)) lack explicit control over the numerical dissipation of the high frequencies (see~\cite{ butcher2016numerical, ascher1997implicit, burbeau2001problem}). (Remarkably, BDF2 corresponds to the generalized-$alpha$ method with maximal high-frequency dissipation, $\rho_\infty=0$.) Another shortcoming of these high-order multistep methods is that their stability regions shrink as their order increases. Therefore, unconditional stability is not possible for higher orders than two. While Runge-Kutta methods show better stability regions and deliver A-stability with higher-order accuracy; they are not self-starting and require another scheme to retrieve solutions at initial time steps.
	
We propose a generalized-$\alpha$ method with an arbitrary order of approximation that provides A-stability parabolic time marching. Our scheme completes a Taylor expansion adding higher-order terms to the residual to obtain an auxiliary system to solve. We first analyze the amplification matrix's spectral properties to establish the parameter values that result in unconditional stability and control the high-frequency numerical dissipation. Next, we allow for complex entries in the amplification matrix to prove the method's A-stability for arbitrary accuracy. Numerical experiments, verify that the stability regions remain unchanged for any accuracy order and show that the technique improves the behavior of the generalized-$\alpha$ method in the moderate-frequency regions. Lastly,~\citet{ hughes2012finite} examines a method's accuracy using the Cayley-Hamilton theory; a commonly used technique in the literature (cf.,~\cite{ jansen2000generalized, behnoudfar2019higher, chung1993time}) but limited to methods that result in $2\times 2$ and $3\times 3$ amplification matrices. We extend this methodology to encompass the general $k\times k$ amplification matrices that result from our time-marching scheme.
	
We organize the remainder of the paper as follows. Section~\ref{sec:PS} describes the problem under consideration. Section~\ref{sec:ho} details a third-order generalized-$\alpha$ method, proves its third-order accuracy in time and its unconditional stability. Section~\ref{sec:HO} introduces our $(3/2k)^{th}$ and $(3/2k+1/2)^{th}$-order accuracy methods, demonstrates  their unconditional stability region, and defines their numerical dissipation control parameters. Section~\ref{sec:Conc} summarizes our contributions.

\section{Problem Statement} \label{sec:PS}

We consider a parabolic, linear initial boundary-value problem: 
\begin{equation} \label{eq:pde}
  \begin{aligned}
    \frac{\partial u(x,t)}{ \partial t} - \nabla \cdot( \kappa\nabla u(x, t)) & = f(x,t), \qquad &&(x, t) \in \Omega \times (0, T], \\
    u(x, t) & = u_D, \qquad \qquad &&x \in \partial \Omega, \\
    u(x, 0) & = u_0, \qquad \quad &&x \in \Omega.
  \end{aligned}
\end{equation}
Let $\Omega =(0, 1)^d \subset \mathbb{R}^d, d=1,2,3,$ be an open bounded domain with Lipschitz  boundary $\partial \Omega$. $\nabla $ is the spatial gradient operator, and $\kappa\in L^\infty(\Omega)$ is the diffusivity coefficient. The source function $f$, the initial data $u_0$, and the Dirichlet boundary condition $u_D$ are given and assumed regular enough so that the problem admits a weak solution. 
	
\subsection{Spatial discretization}

We use a spatial finite element discretization. We define $\mathcal{P}_h$ as a partition of the domain $\Omega$ into elements $K$ and obtain $\Omega_h := \bigcup_{K \in \mathcal{P}_h} K$. Following standard notation for the Lebesgue and Sobolev spaces, we assume $V_h^p$ as a finite-dimensional space composed of polynomial functions with order $p\geq1$ defined on $\Omega_h$. Then multiply the hyperbolic equation~\eqref{eq:pde} with a sufficiently regular test function $w_h \in V^p_h$ (here, $w_h \in H_0^1(\Omega)$), integrate over $\Omega_h$, and apply the divergence theorem to obtain the semi-discretized form of the problem as: 
\begin{equation} \label{eq:wf}
  a(w_h, \dot u_h) + b(w_h, u_h) = \ell(w_h), \qquad w_h \in V_h^p(\Omega), \ t>0,
\end{equation}
where $\dot u = \frac{\partial u}{\partial t}$ and 
\begin{equation}
  a(w, v) = (w, v)_\Omega, \quad  b(w, v) =  (\kappa \nabla w, \nabla u)_\Omega, \quad \ell(w) = (w, f)_\Omega,  \qquad \forall w,v \in V_h^p(\Omega).
\end{equation}
	
We approximate $u(x,t)$ for each fixed $t$ by a function $u_h(x, t)$ that belongs to a finite-dimensional space $V_h^p$. The spatial discretization leads to the following variational formulation:
\begin{equation} \label{eq:disc}
  \begin{aligned}
    \begin{cases}
      \text{Find } u_h(t) = u_h(\cdot, t) \in V_h^p(\Omega) \qquad &\text{ for }t>0: \\
      a(w_h, \dot u_h) + b(w_h, u_h) = \ell(w_h), \qquad &\forall w_h \in V_h^p
    \end{cases}
  \end{aligned}
\end{equation}
with $u_h(0)$ being the interpolation of $u_0$ in $V^h$. The matrix form of the discrete problem~\eqref{eq:disc} becomes:
\begin{equation} \label{eq:mp}
  M \dot{U} + KU = F,
\end{equation}
where $M$ and $K$ are the mass and stiffness matrices, $U$ is the vector of the unknowns, and $F$ is the source vector. The initial condition is 
\begin{equation} \label{eq:u0}
  U(0) = U_0,
\end{equation}
where $U_0$ is the given vector of initial condition $u_{0,h}$.

\begin{remark}
  Herein, we propose a high-order generalized-$\alpha$ by introducing a general spatial discretization, leading to the matrix problem~\eqref{eq:mp}. Nevertheless, one can also apply our construction to any time-dependent semi-discretized problem.
\end{remark}
\begin{remark}
  In problem~\eqref{eq:disc}, for simplicity, we  consider constant $\kappa$ and assume that the solution $u(\cdot\,,t)$ satisfies Dirichlet boundary conditions. Whereas, one requires slight modifications of the discrete bilinear and linear functions for heterogeneous diffusivity and non-homogeneous boundary conditions (see, for example,~\cite{ hughes2012finite}).
\end{remark}

\subsection{Time-discretization using generalized-$\alpha$ method}

Consider a uniform partitioning of the time interval $[0,T]$ with a grid size $\tau$: $0 = t_0 < t_1 < \cdots < t_N = T$ and denote by $U_n, V_n$  the approximations to $U(t_n), \dot U(t_n)$, respectively. The generalized-$\alpha$ method for~\eqref{eq:mp} at time-step $n$ is ~\cite{ jansen2000generalized}: 
\begin{equation} \label{eq:galpha}
  \begin{aligned}
    M V_{n+\alpha_m} + K U_{n+\alpha_f} & = F_{n+\alpha_f}, \\
    U_{n+1} & = U_n + \tau V_n + \tau \gamma \llbracket V_n \rrbracket, \\
    V_0 & = M^{-1} (F_0 - K U_0),
  \end{aligned}
\end{equation}
where
\begin{equation} \label{eq:mf}
  \begin{aligned}
    F_{n+\alpha_f} & = F(t_{n+\alpha_f}), \\
    W_{n+\alpha_g} & = W_n + \alpha_g \llbracket W_n \rrbracket,\qquad \llbracket W_n \rrbracket & = W_{n+1} - W_n, \quad W = U, V, \quad g=m, f.
  \end{aligned}
\end{equation}
According to~\eqref{eq:galpha}, the method requires a two-step computation; the first one solves an implicit system to find $\llbracket V_n \rrbracket$, the second one uses the second equation in~\eqref{eq:galpha} to update $U_{n+1}$ explicitly. We guarantee the scheme's second-order accuracy in time by setting $\gamma = \frac{1}{2} + \alpha_m - \alpha_f$; we also control the numerical dissipation using the following parameter definition~\cite{ jansen2000generalized}:
\begin{equation} \label{eq:hfd}
  \alpha_m = \frac{1}{2} \Big( \frac{3 - \rho_\infty}{1+\rho_\infty} \Big), \qquad \alpha_f = \frac{1}{1+\rho_\infty}.
\end{equation}
where, $\rho_\infty\in[0,1]$ is a user-control parameter. 

\section{Third-order generalized-$\alpha$ method} \label{sec:ho}

The sub-step time-marching affects the accuracy of the generalized-$\alpha$ method~\eqref{eq:galpha}; we use a Taylor expansion to analyze the truncation error and obtain $\mathcal{O}(\tau^3)$. We extend this analysis to derive higher-order representations that rely on Taylor expansions to solve this problem; see also the discussions in~\cite{ deng2019high, behnoudfar2019higher}. For this purpose, we introduce higher-order terms and define $\mathcal{L}^a(w)$ as the $a$-th order derivative of the function $w$ in time. For example, we derive a third-order generalized-$\alpha$ method for the semi-discrete equation~\eqref{eq:mp}, where $A_n$ approximates $\frac{\partial^2 u}{\partial t^2}$. Using this substitution, we readily obtain $M\mathcal{L}^{1}(A_n)^{\alpha_2}+K A_n^{\alpha_f}=\mathcal{L}^{2}(F_n)^{\alpha_f} $ by taking two temporal derivatives from the first equation of~\eqref{eq:galpha}. Assuming sufficient smoothness of the solution and forcing on the time interval under analysis, we propose a method that solves
\begin{equation} \label{eq:4a}
  \begin{aligned}
    MV_{n}^{\alpha_1}&=-K U_{n+1}+F_{n+1}, \\
    M\mathcal{L}^{1}(A_n)^{\alpha_2}&=-K A_n^{\alpha_f}+\mathcal{L}^{2}(F_n)^{\alpha_f} ,
  \end{aligned}
\end{equation}
with updating conditions
\begin{equation} \label{eq:4aup}
  \begin{aligned}
    U_{n+1}  &= U_n + \tau V_n + \frac{\tau^2}{2} A_n + \frac{\tau^3}{6} \mathcal{L}^{1}(A_n)+ \gamma_1 \tau Q_{n}, \\
    A_{n+1}&=A_{n} + \tau \mathcal{L}^{1}(A_n) + \tau \gamma_2  \llbracket {\mathcal{L}^{1}(A_{n})} \rrbracket, 
  \end{aligned}
\end{equation}
where
\begin{equation} 
  \begin{aligned}
    Q_{n}&=V_{n+1}-V_n-\tau A_n-\frac{\tau^2}{2} \mathcal{L}^{1}(A_n),\\
    V_{n}^{\alpha_1}&=V_n+\tau A_{n}+\frac{\tau^2}{2} \mathcal{L}^{1}(A_{n})+\alpha_1 Q_{n},\\[0.2cm]
    \mathcal{L}^{1}(A_n)^{\alpha_2}&=\mathcal{L}^{1}(A_n)+\alpha_2\llbracket {\mathcal{L}^{1}(A_{n})} \rrbracket,\\
    A_n^{\alpha_f}&=A_n+\alpha_f \llbracket {A_{n}} \rrbracket.
  \end{aligned}
\end{equation}
The initial data are also obtained by using the given information $U_0$ as:
\begin{equation}
  \begin{aligned}
    V_0 & = M^{-1} (F_0 - K U_0),\\
    A_0 & = M^{-1} (\mathcal{L}^1(F_0) - K V_0),\\
    \mathcal{L}^1(A_0) & = M^{-1} (\mathcal{L}^2(F_0) - K A_0).
  \end{aligned}
\end{equation}
Next, we define the parameters to guarantee the stability and the third-order of accuracy. 

\subsection{Order of accuracy in time}

Now, we determine the conditions on parameters $\gamma_1$ and $\gamma_2$ such that equations~\eqref{eq:4a}-\eqref{eq:4aup} deliver third-order accuracy in time, which renders the following result.
\begin{theorem} \label{thm:3o}
  Assuming the solution is sufficiently smooth with respect to time, the method in~\eqref{eq:4a} with the update~\eqref{eq:4aup} is third-order accurate in time given
  \begin{equation} \label{eq:3ov1}
    \gamma_1=\alpha_1-\frac{1}{2}, \qquad \qquad 	\gamma_2=\frac{1}{2}-\alpha_{f}+\alpha_2.
  \end{equation}
\end{theorem}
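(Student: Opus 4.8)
The plan is to establish third-order consistency by inserting the exact solution into the scheme and expanding the resulting truncation error in powers of $\tau$. I write $u^{(j)}$ for the $j$-th time derivative of the exact solution at $t_n$ and record the semi-discrete identities obtained by repeatedly differentiating $M\dot U + KU = F$, namely $MV = F - KU$, $MA = \mathcal{L}^1(F) - KV$, and $M\mathcal{L}^1(A) = \mathcal{L}^2(F) - KA$, together with $\mathcal{L}^3(F) = Mu^{(4)} + K\mathcal{L}^1(A)$. These let me trade every occurrence of $M$ and $K$ acting on an exact state for time derivatives of $u$ and $F$, so that the truncation residuals of~\eqref{eq:4a} collapse to pure Taylor remainders.

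The first move is to treat the two pairs of relations separately, since the block $(A_n,\mathcal{L}^1(A_n))$ in the second lines of~\eqref{eq:4a} and~\eqref{eq:4aup} does not involve $(U_n,V_n)$. That block is structurally the classical generalized-$\alpha$ method~\eqref{eq:galpha} applied to the twice-differentiated problem $M\dot A + KA = \mathcal{L}^2(F)$, with $\mathcal{L}^1(A)$ playing the role of the velocity, $\alpha_2$ the role of $\alpha_m$, and $\alpha_f$ unchanged. I therefore invoke the known second-order condition $\gamma=\tfrac12+\alpha_m-\alpha_f$ verbatim to read off $\gamma_2=\tfrac12-\alpha_f+\alpha_2$, and to conclude that $A_{n+1}$ and $\mathcal{L}^1(A_{n+1})$ are second-order accurate, i.e. they carry $\mathcal{O}(\tau^2)$ defects.

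For the $(U_n,V_n)$ block the two relations are implicitly coupled, because $Q_n$ and $V_n^{\alpha_1}$ both contain the unknown $V_{n+1}$. I would first eliminate this coupling: substituting the first update in~\eqref{eq:4aup} into the first equation of~\eqref{eq:4a} and using the identities above reduces the residual equation to $(\alpha_1 M+\gamma_1\tau K)Q_n=\tfrac{\tau^3}{6}Mu^{(4)}+\mathcal{O}(\tau^4)$, so that $Q_n=\tfrac{\tau^3}{6\alpha_1}u^{(4)}+\mathcal{O}(\tau^4)$ and $V_{n+1}$ is pinned down. Feeding this back into the $U$-update and expanding $u(t_{n+1})$ shows that the Taylor part of the update is already third-order; what remains is to force the $\mathcal{O}(\tau^3)$ contributions carried by the velocity defect $Q_n$ and by the second-order $A$-data entering through the $\tfrac{\tau^2}{2}A_n$ term to assemble into the correct next Taylor coefficient. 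Matching the $\tau^3$ coefficient of the update residual is equivalent to imposing the generalized-$\alpha$ consistency of the $U$-sub-step with its load and solution both sampled at $t_{n+1}$ (effective $\alpha_f\equiv1$), which yields $\gamma_1=\tfrac12+\alpha_1-1=\alpha_1-\tfrac12$.

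The main obstacle is precisely this last cancellation. Because the $(A_n,\mathcal{L}^1(A_n))$ block is only second-order, its $\mathcal{O}(\tau^2)$ defect enters the $U$-update at the $\tfrac{\tau^2}{2}A_n$ level and is, after accumulation over $\mathcal{O}(\tau^{-1})$ steps, a priori of the same order as the third-order target; the delicate point is to verify, using the differential identities, that with $\gamma_1=\alpha_1-\tfrac12$ these contributions telescope into the exact Taylor coefficient rather than surviving as an independent error, so that $U_{n+1}$ retains a genuine $\mathcal{O}(\tau^4)$ local error. Combined with the unconditional stability established later, this delivers global third-order accuracy.
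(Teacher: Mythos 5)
Your identification of the $(A_n,\mathcal{L}^{1}(A_n))$ block with the classical generalized-$\alpha$ method applied to the twice-differentiated problem $M\dot A+KA=\mathcal{L}^{2}(F)$ is correct, and importing the known condition $\gamma=\tfrac12+\alpha_m-\alpha_f$ to obtain $\gamma_2=\tfrac12-\alpha_f+\alpha_2$ is legitimate; this is in fact a cleaner route to the $\gamma_2$ condition than the paper's, which re-derives it from the two-step recurrence~\eqref{eq:a40} for the diagonal block $G_2$ of the amplification matrix. Your elimination of $V_{n+1}$ is also sound: with exact data at $t_n$, the differential identities do give $(\alpha_1M+\gamma_1\tau K)Q_n=\tfrac{\tau^3}{6}Mu^{(4)}+\mathcal{O}(\tau^4)$, hence $Q_n=\tfrac{\tau^3}{6\alpha_1}u^{(4)}+\mathcal{O}(\tau^4)$.

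However, pushing that computation one line further exposes a genuine gap in your derivation of $\gamma_1$. Since $Q_n=\mathcal{O}(\tau^3)$, the one-step defect of the $U$-update from exact data is
\begin{equation*}
U_{n+1}-u(t_{n+1})=\Bigl(\frac{\gamma_1}{6\alpha_1}-\frac{1}{24}\Bigr)\tau^4\,u^{(4)}+\mathcal{O}(\tau^5),
\end{equation*}
which is $\mathcal{O}(\tau^4)$ for \emph{every} $\gamma_1$; likewise the one-step velocity defect, $\tfrac{\tau^3}{6}\bigl(\tfrac{1}{\alpha_1}-1\bigr)u^{(4)}+\mathcal{O}(\tau^4)$, involves only $\alpha_1$. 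So ``matching the $\tau^3$ coefficient of the update residual'' is vacuous: a single-step consistency analysis cannot produce the condition $\gamma_1=\alpha_1-\tfrac12$ at all. That condition is a statement about error \emph{propagation}: the $\mathcal{O}(\tau^3)$ per-step defects in the auxiliary variables $V_n$, $A_n$, $\mathcal{L}^{1}(A_n)$ accumulate over $\mathcal{O}(\tau^{-1})$ steps and re-enter $U$ through the $\tau V_n$ and $\tfrac{\tau^2}{2}A_n$ terms, and whether they cancel or survive is governed by iterating the amplification matrix, not by one step. You acknowledge exactly this (``the delicate point is to verify \dots these contributions telescope'') but never perform the verification, and your appeal to ``generalized-$\alpha$ consistency with effective $\alpha_f\equiv1$'' asserts precisely the fact that needs proof. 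The paper closes this gap by turning the propagation question into algebra: it forms the amplification matrix~\eqref{eq:ampm}, applies the Cayley--Hamilton theorem to obtain the multi-step recurrence~\eqref{eq:C4} (or, block-wise, the two-step recurrences~\eqref{eq:a40} and~\eqref{eq:a4new}), and substitutes the Taylor expansions into those recurrences; it is the cancellation of the low-order terms in $U_{n+1}-\tr(G_1)\,U_n+\det(G_1)\,U_{n-1}=0$ that forces $\gamma_1=\alpha_1-\tfrac12$. To repair your argument you would have to set up this recurrence (or an equivalent accumulation analysis), at which point you are reproducing the paper's proof.
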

\begin{proof}
  Substituting~\eqref{eq:4aup} into~\eqref{eq:4a}, we obtain an equation system for each time step:
  \begin{equation}\label{eq:proof}
    A\bold{ U_{n+1}}=B \bold{U_n}+\bold{F_{n+\alpha_f}},
  \end{equation}
  For simplicity, we assume the matrix problem~\eqref{eq:proof} has one spatial degree of freedom. Then, letting $\bold{U}_{n}^T=
      \begin{bmatrix}
        U_{n},\
        \tau V_{n},\
        \tau^2 A_{n},\
        \tau^3\mathcal{L}^{1}(A_{n})
      \end{bmatrix}^T,$ we get: 

  \begin{equation}
    \begin{aligned}A&=
      \begin{bmatrix}
        1 & -\gamma_1&0&0 \\
        \tau \lambda & \alpha_1&0&0 \\
        0 & 0&1&-\gamma_2 \\
        0 & 0&\tau\alpha_f \lambda&\alpha_2
      \end{bmatrix},\
      B&=
      \begin{bmatrix}
        1 & 1-\gamma_1& \frac{1}{2}-\gamma_1& \frac{1}{6}-\frac{1}{2}\gamma_1\\
        0 & \alpha_1-1& \alpha_1-1&\frac{1}{2}( \alpha_1-1) \\
        0 & 0&1&1-\gamma_2 \\
        0 & 0&-\tau(1-\alpha_{f}) \lambda&\alpha_2-1 
      \end{bmatrix}.
    \end{aligned}
  \end{equation}
  Without loss of generality, we set $\bold{F_{n+\alpha_f}=0}$. Thus, the amplification matrix $G$ reads:
  \begin{equation} \label{eq:ampm}
    G=A^{-1}B.
  \end{equation}
  This matrix-matrix multiplication results in an upper-block diagonal matrix:
  \begin{equation}
    G=\begin{bmatrix}
      G_1&H\\
      0&G_2
    \end{bmatrix},
  \end{equation}
  with 
  \begin{align}
    G_1&=\theta_1\begin{bmatrix}
      {\alpha_1} & {\alpha_1-\gamma_1}\\
      -{\theta} & {\alpha_1+ (\gamma_1-1) \theta-1}
    \end{bmatrix},\\\label{eq:G2}
    G_2&=\theta_2\begin{bmatrix}
      {\alpha_2+(\alpha_f-1) \gamma_2 \theta}& {\alpha_2-\gamma_2} \\
      -{\theta}& {\alpha_2+\alpha_f (\gamma_2-1) \theta-1}
    \end{bmatrix},
  \end{align}
  where $\theta_1=(\alpha_1+ \gamma_1 \theta)^{-1}$ and $\theta_2=(\alpha_2+\alpha_f \gamma_2 \theta)^{-1}$. Additionally, we define $\theta:=\tau \lambda_\theta $ with $\lambda_\theta$ being the eigenvalues of $M^{-1}K$ related to the spatial discretization. Then, the $4\times 4$ matrix G becomes:
  \begin{equation}\label{eq:C4}
    \begin{aligned}
      \textbf{0}&=U^{n+1}-(\tr G)U^n+\frac{1}{2}\left((\tr
        G)^2-\tr(G^2)\right)U^{n-1}\\
      &-\frac{1}{6}\left((\tr G)^3-3\tr(G^2)(\tr G)+2\tr (G^3)\right)U^{n-2}+\det (G) U^{n-3},
    \end{aligned}
  \end{equation}
  where $(\tr G)$ is the trace of the matrix $G$. The identity~\eqref{eq:C4} is a consequence of Cayley-Hamilton Theorem for a $4 \times 4$ matrix. We detail these computations when we analyze a general $k \times k$ matrix in the next section.  Later, we substitute the Taylor expansions of $U^{n+1},\,U^{n-1},\,U^{n-2}$ in time with the truncation error of $\mathcal{O}(\tau^4)$ into~\eqref{eq:C4} . Then, one can verify that these parameter definitions~\eqref{eq:3ov1} cancels the low-order terms and delivers third-order accuracy.
\end{proof}

\begin{remark}
  The original generalized-$\alpha$ method delivers second-order accuracy and dissipation control using two equations; similarly, we obtain a third-order rather than fourth-order to maintain the unconditional stability and dissipation control. For example, we get fourth-order accuracy by setting $\gamma_2=0$, which results in an explicit-implicit method with a CFL condition. Therefore, we omit the details for other possible choices of $\gamma_i$ and only consider~\eqref{eq:3ov1} that leads to unconditional A-stability.  
\end{remark}
\begin{remark}[Alternative proof for accuracy order]
  Accordingly, due to the structure of $G$, we only need to study diagonal blocks' behaviour in our analysis. Therefore, we require that the high-order unknowns $A_n, \mathcal{L}^{1}(A_{n})$, associated with the lower diagonal-block $G_2$ be second-order accurate~\cite{ deng2019high}:
  \begin{equation} \label{eq:a40}
    A_{n+1} - \tr (G_2)\, A_n - \det(G_2)\,A_{n-1} = 0.
  \end{equation}
  Then, further substitution of Taylor expansions of $A_{n+1}$ and $A_{n-1}$ in time as: 
  \begin{equation} \label{eq:te3}
    \begin{aligned}
      A_{n+1} & = A_{n}+\tau \mathcal{L}^{1}(A_{n}) +\dfrac{\tau^2 }{2}\mathcal{L}^{2}(A_{n})+ \mathcal{O}(\tau^3), \\
      A_{n-1} & =A_{n}-\tau \mathcal{L}^{1}(A_{n})+\dfrac{\tau^2 }{2}\mathcal{L}^{2}(A_{n}) +\mathcal{O}(\tau^3),
    \end{aligned}
  \end{equation}
  requires $\gamma_2=\frac{1}{2}-\alpha_{f}+\alpha_2$ to guarantee the second-order accuracy in time of $A_{n+1}$. Then, similarly to~\eqref{eq:a40} and using the upper diagonal-block $G_1$, we obtain: 
  \begin{equation} \label{eq:a4new}
    U_{n+1} - \tr(G_1)\, U_n + \det(G_1)\,U_{n-1} = 0.
  \end{equation}
  We obtain the third-order accuracy in time using Taylor expansions with truncation error of $\mathcal{O}(\tau^4)$ for $U_{n+1}$ and $U_{n-1}$, which allows to get
  \begin{equation} \label{eq:te6}
    \begin{aligned}
      U_{n+1} & = U_n + \tau V_n + \underbrace{\dfrac{\tau^2}{2} A_n+\dfrac{\tau^3}{6}\mathcal{L}^{1}(A_{n})}_\mathcal{R} +\mathcal{O}(\tau^4), \\
      U_{n-1} & = U_n - \tau V_n+\underbrace{\dfrac{\tau^2}{2} A_n-\dfrac{\tau^3}{6}\mathcal{L}^{1}(A_{n})}_\mathcal{R} +\mathcal{O}(\tau^4) ,
    \end{aligned}
  \end{equation}
  Here, the $G_2$ equations show that $\mathcal{R}$  has second-order of accuracy and define a residual term. Thus, by following a similar approach, we verify that the remaining terms have second-order accuracy in time by setting $ \gamma_1=\alpha_1-\frac{1}{2}$. Then, we add the residuals to the second-order accurate solution, to have the truncation error of $\mathcal{O}(\tau^4)$ and consequently, a third-order accurate scheme in time, which completes the proof.
\end{remark}

\subsection{Stability analysis and eigenvalue control}\label{sec:anal}

To obtain an unconditionally stable method, we bound the spectral radius of the amplification $G$ by one; thus, we first determine the eigenvalues of $G$ as: 
\begin{equation}\label{eq:det3}
  \begin{aligned}
    0&=\det \left(G-\tilde{\lambda}I\right)=\det\begin{bmatrix}
      G_1-\tilde{\lambda}I&H\\
      0&G_2-\tilde{\lambda}I
    \end{bmatrix}\\
    &=\det\left(G_1-\tilde{\lambda}I\right)\cdot \det\left(G_2-\tilde{\lambda}I\right),
  \end{aligned}
\end{equation}
with $I$ and $\tilde{\lambda}$ denoting the corresponding identity matrix and eigenvalues of the matrix, respectively. Thus, we obtain a solution for~\eqref{eq:det3} by solving two uncoupled problems $\det\left(G_i-\tilde{\lambda}I\right)=0,\, i=1,\,2$. For this, we have~\cite{ horn1990matrix}:
\begin{equation}\label{eq:stab}
  \det\left(G_i-\tilde{\lambda}I\right)=\det\left(\Lambda^i_{11}-\tilde{\lambda}I\right)\cdot \det\left(\Lambda^i_{22}-\tilde{\lambda}I-\Lambda^i_{21}\left(\Lambda^i_{11}-\tilde{\lambda}I\right)^{-1}\Lambda^i_{12}\right),
\end{equation}
where $\Lambda^i_{jk}$ is the $jk$ component of $G_i$. Therefore, substituting~\eqref{eq:G2} into~\eqref{eq:stab}, we consider $\det\left(\Lambda^2_{11}-\tilde{\lambda}I\right)=0$ which leads to the following bound on $\tilde{\lambda}$:
\begin{equation}
  -1\leq \left(\alpha_2+(\alpha_f-1) \gamma_2 \theta\right){\theta_2}\leq 1\implies 0\leq \gamma_2 \theta {\theta_2}\leq 2.
\end{equation}
The left inequality is already satisfied since all parameters are non-negative and the matrices $M$ and $K$ obtained after the spatial discretizations are positive definite. We rewrite the right-hand side of the inequality as:
\begin{equation}\label{eq:ineq}
  \gamma_2\theta (1-2\alpha_{f})\leq 2\alpha_2.
\end{equation} 
To satisfy~\eqref{eq:ineq} unconditionally, one requires $\dfrac{1}{2}\leq\alpha_{f}$. Next, we solve  $$\det\left(\Lambda^2_{22}-\tilde{\lambda}I-\Lambda^2_{21}\left(\Lambda^2_{11}-\tilde{\lambda}I\right)^{-1}\Lambda^2_{12}\right)=0$$ in~\eqref{eq:stab} using:
\begin{equation}
  \left(1-{\gamma_2\theta}\theta_2\tilde{\lambda}\right)\cdot \left(1-(1+\alpha_f\gamma_2\theta)\theta_2\tilde{\lambda}\right)+\left(1-{\theta}\theta_2\right)\cdot  \left(1-(\gamma_2+\alpha_f\gamma_2\theta)\theta_2\right)=0.
\end{equation} 
We omit the details, and to bound the spectral radius by one, it is sufficient to impose: 
\begin{equation}
  \alpha_2\geq \alpha_{f}\geq\dfrac{1}{2}. 
\end{equation}
To solve~\eqref{eq:stab} for $i=1$, one can follow the same steps. Thus, we solve  $$\det\left(\Lambda^1_{11}-\tilde{\lambda}I\right)=0$$ and bound the spectral radius as:
\begin{equation}
  -1\leq \alpha_1 {\theta_1}\leq 1 \implies \gamma_1\theta\leq 2\left(\alpha_1+\gamma_1\theta\right).
\end{equation}
Furthermore,  substituting $$\det\left(\Lambda^1_{22}-\tilde{\lambda}I-\Lambda^1_{21}\left(\Lambda^1_{11}-\tilde{\lambda}I\right)^{-1}\Lambda^1_{12}\right)=0$$ in~\eqref{eq:stab} implies that $\alpha_1\geq 1$. We omit the details for brevity. We calculate the eigenvalues for the case $\theta \to \infty$ equal to $\rho^\infty_1$ to control the eigenvalues in high frequency regions. Accordingly, the eigenvalues of the amiplification matrix read:
\begin{equation}
  \lambda_1=0,\quad \lambda_2=\frac{\gamma_1-1}{\gamma_1},\quad\lambda_3=\frac{\alpha_f-1}{\alpha_f},\quad\lambda_4=\frac{\gamma_2-1}{\gamma_2}.\quad
\end{equation}
To provide control on the numerical dissipation, following closely the analysis in~\cite{ chung1993time,jansen2000generalized,behnoudfar2018variationally}, we set $\lambda_2=\rho^\infty_1$ and $\lambda_3=\lambda_4=\rho^\infty_2$ and find corresponding expressions for $\alpha_1,\,\alpha_2,\,\alpha_f$ as:
\begin{equation}
  \begin{aligned}
    \alpha_1&=\frac{1}{2}\left(\frac{3+\rho^\infty_1}{1+\rho^\infty_1}\right),\\
    \alpha_2&=\frac{1}{2}\left(\frac{3-\rho^\infty_2}{1+\rho^\infty_2}\right),\\
    \alpha_f&=\frac{1}{1+\rho^\infty_2}.
  \end{aligned}
\end{equation}
Therefore, setting $0\leq \rho^\infty_1,\rho^\infty_2\leq1 $, one controls the eigenvalues of the amplification matrix and the high-frequency damping. Figure~\ref{fig:dis} shows the behaviour of these eigenvalues; for large $\theta$, the eigenvalues $\lambda_{1,2}$ of the first block of the amplification matrix approach $0$ and $\rho^\infty_1$ and eigenvalues of the second  block $\lambda_{3,4}$ converge to $\rho^\infty_2$.
\begin{figure}[!ht]	
  \subfigure[$\rho^\infty_1=0.8$, $ \rho^\infty_2=0.2$ ]{\centering\includegraphics[width=6.5cm]{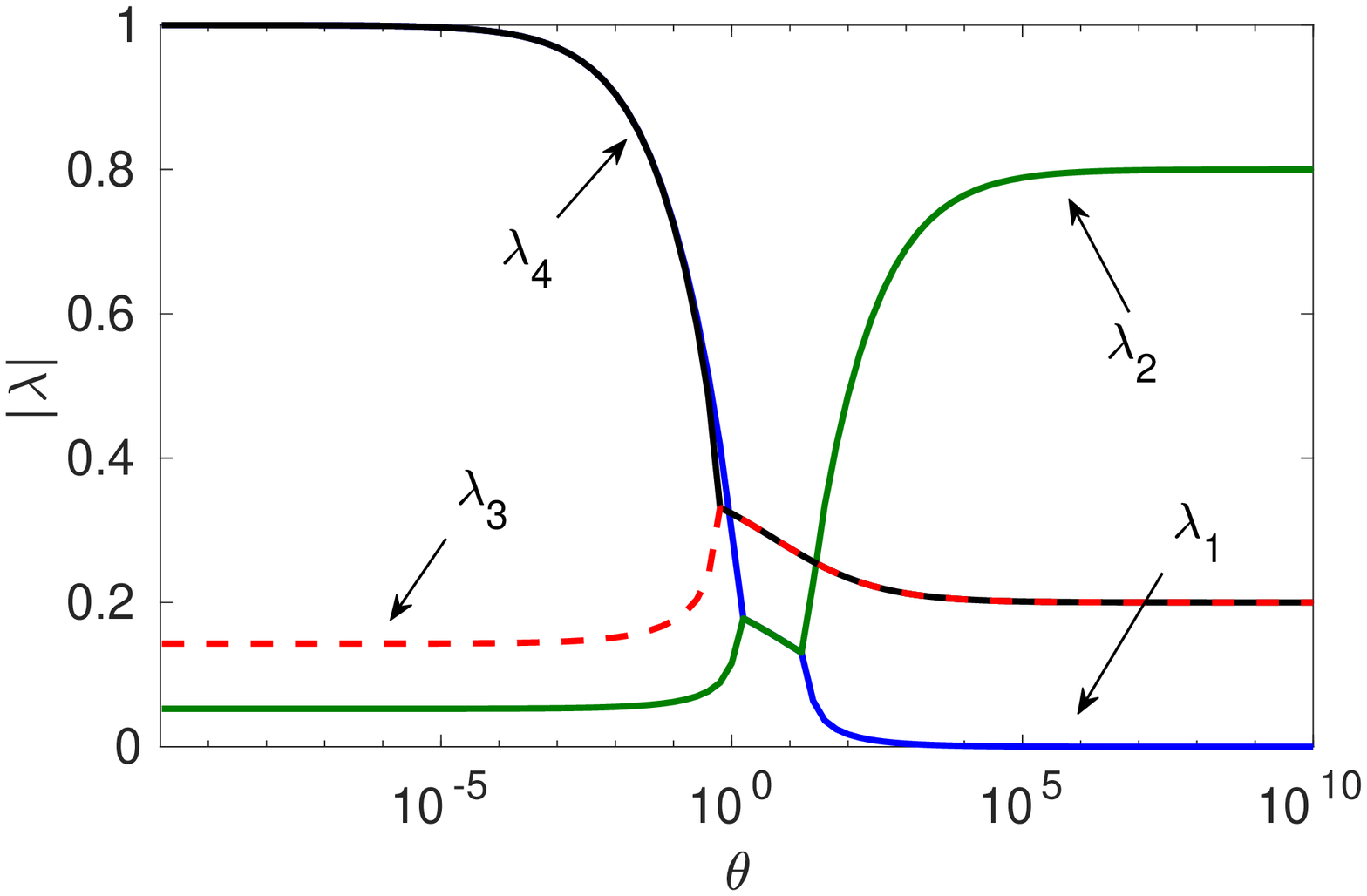} }
  \subfigure[$\rho^\infty_1=\rho^\infty_2=0$ ]{\centering\includegraphics[width=6.42cm]{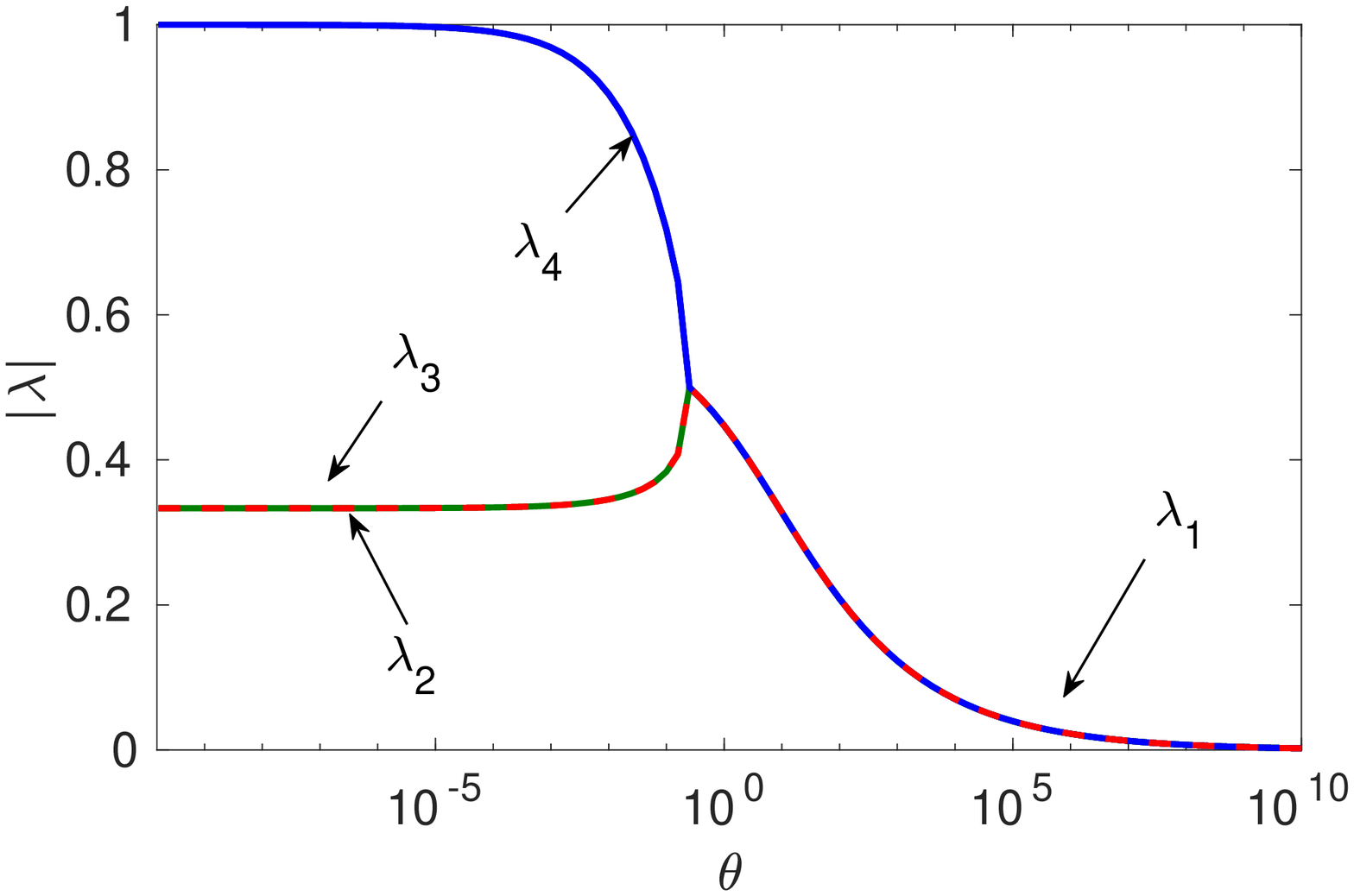} }
  \caption{{The eigenvalues of the amplification matrix~\eqref{eq:ampm}.}}
  \label{fig:dis}
\end{figure}
	
\section{Higher-order accuracy in time}\label{sec:HO}

In this section, we propose our method delivering higher-order of accuracy in time. In general, to solve the semi-discretized matrix problem~\eqref{eq:mp}, we formulate a high-order generalized-$\alpha$ method by solving $k\geq2$ equations as:
\begin{equation} \label{eq:ho}
  \begin{aligned}
    MV_{n}^{\alpha_1}&=-K U_{n+1}+F_{n+1}, \\
    M\mathcal{L}^{2j-3}(A_n)^{\alpha_j}&=-K \mathcal{L}^{2j-4}(A_{n+1})+\mathcal{L}^{2j-2}(F_{n+1}), \qquad j=2, \cdots, k-1, \\
    M\mathcal{L}^{2k-3}(A_n)^{\alpha_k}&=-K \mathcal{L}^{2k-4}(A_{n})^{\alpha_f}+\mathcal{L}^{2k-2}(F_n)^{\alpha_f}, 
  \end{aligned}
\end{equation}
to obtain $(\frac{3}{2}k)^{th}$ order of accuracy for even $k$ and $(\frac{3}{2}k+\frac{1}{2})^{th}$ for odd $k$. Next, we update the system explicitly using the following:
\begin{equation} \label{eq:ho2}
  \begin{aligned}
    U_{n+1} & = U_n + \tau V_n + \frac{\tau^2}{2} A_n + \frac{\tau^3}{6} \mathcal{L}^{1}(A_n)+ \cdots+\frac{\tau^{2k-1}}{(2k-1) !} \mathcal{L}^{2k-3}(A_n)+\tau \gamma_1 Q_{n,1}, \\
    \mathcal{L}^{2j-4}(A_{n+1})&= \mathcal{L}^{2j-4}(A_{n}) + \cdots+\frac{\tau^{2k-2j+1}}{(2k-2j+1)!} \mathcal{L}^{2k-3}(A_n)+\tau \gamma_j Q_{n,j},\quad j=2, \cdots, k-1, \\[0.2cm] 
    \mathcal{L}^{2k-4}(A_{n+1}) &=\mathcal{L}^{2k-4}(A_{n}) + \tau \mathcal{L}^{2k-3}(A_{n}) + \tau \gamma_k \cdot \del{\mathcal{L}^{2k-3}(A_{n})},
  \end{aligned}
\end{equation}
where we have
\begin{equation} \label{eq:ho3}
  \begin{aligned}
    Q_{n,1}&=V_{n+1}-V_n-\tau A_{n}-\cdots-\frac{\tau^{2k-2}}{(2k-2) !} \mathcal{L}^{2k-3}(A_{n}),\\
    V_{n}^{\alpha_1}&=V_n+\tau A_{n}+\cdots+\frac{\tau^{2k-1}}{(2k-1) !} \mathcal{L}^{2k-3}(A_{n})+\alpha_1 Q_{n,1},\\
    Q_{n,j}&=\mathcal{L}^{2j-4}(A_{n+1})-\mathcal{L}^{2j-4}(A_n)-\cdots-\frac{\tau^{2k-2j+1}}{(2k-2j+1)!} \mathcal{L}^{2k-3}(A_n),
  \end{aligned}
\end{equation}
and for $j=2, \cdots, k-1$
\begin{equation} \label{eq:ho3.1}
  \begin{aligned}
    \mathcal{L}^{2j-3}(A_{n})^{\alpha_j}&=\mathcal{L}^{2j-3}(A_n)+\cdots+\frac{\tau^{2k-2j+1}}{(2k-2j+1)!} \mathcal{L}^{2k-3}(A_n) +\alpha_jQ_{n,j},\\
    \mathcal{L}^{2k-3}(A_n^{\alpha_k})&=\mathcal{L}^{2k-3}(A_n)+\alpha_k \cdot  \del{ \mathcal{L}^{2k-3}(A_{n})},\\
    \mathcal{L}^{2k-4}(A_n^{\alpha_f})&=\mathcal{L}^{2k-4}(A_n)+\alpha_f \cdot  \del{\mathcal{L}^{2k-4}(A_{n})}.
  \end{aligned}
\end{equation}
For $k=1, 2$, this reduces to the second- and third-order generalized-$\alpha$ methods, respectively. Next, we define the parameters associated with the accuracy of the method.
	
\subsection{Analysing the order of accuracy}
We analyse the accuracy of the method that~\eqref{eq:ho}-\eqref{eq:ho3.1} define by first deriving the amplification matrix. Similarly to the third-order method, we substitute~\eqref{eq:ho2} into~\eqref{eq:ho} and find a matrix system:
\begin{equation}\label{eq:eigHo}
  L \bfs{U}_{n+1} = R \bfs{U}_n+\bold{F_{n+\alpha_f}}.
\end{equation}
Therefore, the amplification matrix corresponding to a $k$-equation system becomes $G=L^{-1}R$ with:
\begin{equation}\label{eq:HOampblock}
  G=\begin{bmatrix}
    G_1 &\Xi_{12} &\cdots&\cdots& \Xi_{1k}\\
    \boldsymbol{0}&G_2&\Xi_{23} &\cdots& \Xi_{2k}\\
    \vdots& &\ddots& \\
    \boldsymbol{0}&\boldsymbol{0}&\cdots & G_{k-1}& \Xi_{k-1}\\
    \boldsymbol{0}&\boldsymbol{0}&\cdots & \boldsymbol{0}& G_k
  \end{bmatrix},
\end{equation}
where
\begin{align}
  G_j&=\theta_j\begin{bmatrix}
    {\alpha_j} & {\alpha_j-\gamma_j}\\
    -{\theta} & {\alpha_j+ (\gamma_j-1) \theta-1}
  \end{bmatrix},\qquad j=1, \cdots, k-1,\\\label{eq:GHO2}
  G_k&=\theta_k\begin{bmatrix}
    {\alpha_k+(\alpha_f-1) \gamma_k \theta}& {\alpha_k-\gamma_k} \\
    -{\theta}& {\alpha_k+\alpha_f (\gamma_k-1) \theta-1}
  \end{bmatrix},
\end{align}
and denoting $\theta_j=(\alpha_j+ \gamma_j \theta)^{-1}$, $\theta_k=(\alpha_k+\alpha_f \gamma_k \theta)^{-1}$.

\begin{theorem}
  The method that equations~\eqref{eq:ho}-\eqref{eq:ho3.1} define for semi-discterized system~\eqref{eq:disc}, delivers $(\frac{3}{2}k)^{th}$ order of accuracy for even $k$ and $(\frac{3}{2}k+\frac{1}{2})^{th}$ order for odd $k$ in time by setting:
  \begin{equation} \label{eq:3ovHo}
    \begin{aligned}
      \gamma_j&=\alpha_j-\frac{1}{2}, \qquad \text{for }\, j=1,\cdots,k-1,\\
      \gamma_k&=\frac{1}{2}-\alpha_{f}+\alpha_k.
    \end{aligned}
  \end{equation}

  \begin{proof}
    For a $k$-system of equations, we expand using Taylor series for $k$ unknowns $U^{n+1},\, U^{n-1},\, \cdots\,,U^{n-k+1}$ around $U^n$ in time with truncation error of $\mathcal{O}\left(\tau^{\frac{3}{2}k+1}\right)$ for even $k$, and $\mathcal{O}\left(\tau^{\frac{3}{2}k+\frac{3}{2}}\right)$ for odd $k$ as:
    \begin{equation}\label{eq:taylor}
      \begin{aligned}
        U^{n+1}&=U^n+\tau V^n+\frac{\tau^2}{2}A^n+\cdots,\\
        U^{n-1}&=U^n-\tau V^n+\frac{\tau^2}{2}A^n+\cdots,\\
        &\vdots\\
        U^{n-k+1}&=U^n-\tau({n-k+1}) V^n+\frac{\left(\tau({n-k+1})\right)^2}{2}A^n+\cdots.
      \end{aligned}
    \end{equation}
    Then, for second-order accurate schemes, \citet{ hughes2012finite} uses the amplification matrix's invariants and the Taylor expansions to analyze the accuracy order. The approach applies the Cayley--Hamilton theorem to the resulting $2\times 2$ and $3\times 3$ amplification matrices for parabolic and hyperbolic problems, respectively, (see, e.g.,~\cite{ jansen2000generalized, chung1993time}). Herein, for schemes general $k\times k$ amplification matrices, we generalize the analysis to determine the parameters such that the method delivers a desired order of accuracy. Having this in mind, we discuss our general approach in the next section. 
  \end{proof}
\end{theorem}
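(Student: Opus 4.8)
The plan is to reduce the $2k\times 2k$ problem to a family of two-dimensional problems by exploiting the block upper-triangular structure of $G$ in~\eqref{eq:HOampblock}, exactly as the $k=2$ case was handled in the alternative proof of Theorem~\ref{thm:3o}. First I would record that, since $G$ is block upper-triangular with the $2\times 2$ diagonal blocks $G_j$ of~\eqref{eq:GHO2}, its characteristic polynomial factors as
\begin{equation}
  \det(G-\tilde\lambda I)=\prod_{j=1}^{k}\det(G_j-\tilde\lambda I)=\prod_{j=1}^{k}\big(\tilde\lambda^2-\tr(G_j)\,\tilde\lambda+\det(G_j)\big),
\end{equation}
which generalizes~\eqref{eq:det3}. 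Applying the Cayley--Hamilton theorem to each $2\times 2$ block then yields a two-step recurrence for the pair of unknowns carried by that block, of the form $X^{(j)}_{n+1}-\tr(G_j)\,X^{(j)}_{n}+\det(G_j)\,X^{(j)}_{n-1}=0$, the $k$-block analogue of~\eqref{eq:a40} and~\eqref{eq:a4new}.

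Second, I would run the accuracy argument from the bottom block upward. The last block $G_k$ is autonomous, since no coupling enters it from below, so its recurrence governs the highest derivative $\mathcal{L}^{2k-4}(A)$ on its own; substituting Taylor expansions of the type~\eqref{eq:te3} and cancelling the leading error term forces $\gamma_k=\tfrac12-\alpha_f+\alpha_k$ and delivers second-order accuracy for that variable, precisely as in the third-order case. For each preceding block $j=k-1,\dots,1$ the recurrence picks up a forcing contributed by the off-diagonal blocks $\Xi_{j\ell}$, which injects the higher derivatives already controlled below. Writing the update~\eqref{eq:ho2} as a truncated Taylor polynomial plus a residual $\mathcal{R}_j$, the generalization of the term $\mathcal{R}$ isolated in~\eqref{eq:te6}, I would show that $\mathcal{R}_j$ inherits its order from the already-established accuracy of the lower blocks, so that matching the symmetric and antisymmetric combinations of~\eqref{eq:taylor} against the two-step recurrence cancels the low-order terms exactly when $\gamma_j=\alpha_j-\tfrac12$.

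Third, I would assemble the global order. The update for $U_{n+1}$ in~\eqref{eq:ho2} is a Taylor polynomial carried to $\tau^{2k-1}\mathcal{L}^{2k-3}(A_n)/(2k-1)!$, with intrinsic remainder $\mathcal{O}(\tau^{2k})$; each auxiliary derivative enters at a fixed power of $\tau$, so the achievable order is the balance between this remainder and the least accurate auxiliary derivative weighted by the power at which it appears. Feeding in the per-block orders from the second step, each sharpened by the parity boost that the symmetric two-step recurrence supplies, and tracing the dependence up the chain yields $\tfrac{3}{2}k$ for even $k$ and $\tfrac{3}{2}k+\tfrac12$ for odd $k$. The general tool needed here is the $k\times k$ Cayley--Hamilton expansion that specializes to~\eqref{eq:C4} at $k=2$, which I would first state and prove, the content promised in the statement's closing sentence, and then combine with~\eqref{eq:taylor}.

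The hard part will be this last step: making the bookkeeping uniform in $k$ and, in particular, pinning down the even/odd dichotomy. The extra half order for odd $k$ comes from a parity cancellation, namely that when the highest tracked derivative is of odd order the symmetric combination of the forward and backward expansions in~\eqref{eq:taylor} annihilates one additional remainder term; verifying that this cancellation propagates through all the coupling blocks $\Xi_{j\ell}$ without being spoiled is the delicate point. A clean way to control it is to track only the parities of the surviving terms in the $k\times k$ characteristic recurrence rather than their exact coefficients, and to argue by induction on the block index that the accumulated truncation error has the claimed order; establishing that inductive step rigorously, uniformly in $k$, is where the real work lies.
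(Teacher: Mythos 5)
Your plan is essentially sound, but it follows a genuinely different route from the paper's own proof. The paper argues globally: it Taylor-expands the history of $U$ as in \eqref{eq:taylor}, applies the Cayley--Hamilton theorem to the \emph{full} amplification matrix \eqref{eq:HOampblock} to obtain the single long recurrence \eqref{eq:accuracy2}, whose coefficients $c_i$ are computed from traces of powers of $G$ via the Bell-polynomial machinery of \ref{app:1}, and then fixes the parameters \eqref{eq:3ovHo} so that the low-order terms cancel. You instead work block by block: you factor the characteristic polynomial over the $2\times 2$ diagonal blocks \eqref{eq:GHO2} (which the paper does only in its stability analysis, \eqref{eq:det3} and \eqref{eq:det}), apply Cayley--Hamilton to each block to get two-step recurrences, and induct upward from the autonomous bottom block, treating the couplings $\Xi_{j\ell}$ as forcings whose size is controlled by the already-analyzed lower blocks. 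That is precisely the generalization to arbitrary $k$ of the paper's ``alternative proof'' remark after Theorem~\ref{thm:3o} (equations \eqref{eq:a40}--\eqref{eq:te6}), which the paper itself states only for $k=2$. Your route localizes the role of each $\gamma_j$, needs only quadratic characteristic polynomials rather than Bell-polynomial coefficients, and makes the accuracy decomposition mirror the stability decomposition of Section~\ref{sec:stab}; the paper's route is heavier but more generic---the full-matrix recurrence applies to any one-step scheme, block-triangular or not, which is exactly the general-purpose tool the paper wants to advertise.

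One caution: neither you nor the paper actually executes the decisive computation, and in your scheme the step you defer is not a technicality. A naive propagation of block accuracies up the chain does not produce the stated orders: for $k=3$, the bottom block is second-order accurate, so the middle block comes out third-order accurate, and, entering the $U$-update at weight $\tau^2$, it would yield only fourth-order global accuracy for $U$ instead of the claimed fifth. The extra half order for odd $k$ exists only because of the parity cancellation in the symmetric two-step recurrences that you flag as ``where the real work lies''; that inductive parity lemma is therefore the actual content of the theorem, not polish. To be fair, the paper's proof hides the same gap---it asserts that the choice \eqref{eq:3ovHo} ``cancels lower-order terms'' and delivers $\tfrac{3}{2}k$ (even $k$) and $\tfrac{3}{2}k+\tfrac{1}{2}$ (odd $k$) without verifying which error terms survive---so your proposal is no less complete than the paper's argument, but neither, as written, would pass as a full proof.
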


\subsubsection{Analysing the accuracy of a general system}

Herein, we introduce a technique to study the accuracy of a time-marching method with arbitrary order. For this, we first discuss the general form of Cayley–Hamilton theorem. That is, for a general $k\times k$ matrix $G$, the characteristic polynomial of $G$, abusing notation, is $p(\lambda)=\det(\lambda I_k-G)$, with $I_k$ denoting the $k\times k$ identity matrix, for which we rewrite the characteristic polynomial $p(\lambda)$ as~\cite{ horn1990matrix}:
\begin{equation}\label{eq:pol}
  {p}(\lambda)=\lambda^k+c_{k-1}\lambda^{k-1}+\cdots+c_{1}\lambda^{1}+c_0.
\end{equation}
Then, instead of the scalar variable $\lambda$, one can obtain a similar polynomial to~\eqref{eq:pol} with the matrix $G$ as:
\begin{equation}\label{eq:polG}
  {p}(G)=G^k+c_{k-1}G^{k-1}+\cdots+c_{1}G^{1}+c_0 I_k,
\end{equation}
for which the Cayley--Hamilton theorem states that the polynomial~\eqref{eq:polG} equals to the zero matrix, $p(G)=\textbf{0}$~\cite{ householder2013theory}. We provide further details on the determination of the coefficients $c$ in~\ref{app:1}. Next, we multiply~\eqref{eq:polG} by $U^{n-k+1}$ to have:
\begin{equation}\label{eq:accuracy1}
  G^k U^{n-k+1}+c_{k-1}G^{k-1}\,U^{n-k+1}+\cdots+c_{1}G^{1}\,U^{n-k+1}+c_0\, U^{n-k+1}=\textbf{0},
\end{equation}
given that $U^n=G^nU^0$~\cite{ deng2019high}, we have that:
\begin{equation}\label{eq:accuracy2}
  U^{n+1}+c_{k-1}\,U^{n}+\cdots+c_{1}\,U^{n-k}+c_0\, U^{n-k+1}=\textbf{0}.
\end{equation}
then, substituting~\eqref{eq:taylor} into~\eqref{eq:accuracy2} and collecting the terms lead to:
\begin{equation}
  (1+c_{k-1}+\cdots +c0)U^n+(1-c_{k-2}+\cdots+(-1)^{k+1} c_0)\tau V^n+\cdots=\textbf{0}.
\end{equation}
Using the problem definition, we have $V^n=\lambda_\theta U^n$, $A^n=\lambda_\theta^2 U^n$, which is true for all higher-order terms defined using $U^n$. Finally, we set the terms $\gamma_j$ and $\gamma_k$ to cancel lower-order terms to obtain the optimal accuracy. 

\subsection{Stability analysis}\label{sec:stab}

In this section we follow closely our previous discussions in~\ref{sec:anal} to establish the unconditional stability of the method that equations~\eqref{eq:ho}-\eqref{eq:ho3.1} define; thus, we calculate the eigenvalues of the amplification matrix $G$ in~\eqref{eq:HOampblock} as:
\begin{equation}\label{eq:det}
  \begin{aligned}
    0&=\det \left(G-\tilde{\lambda}I\right)=\det
    \begin{bmatrix}
      G_1-\tilde{\lambda}I &\Xi_{12} &\cdots&\cdots& \Xi_{1k}\\
      \boldsymbol{0}&G_2-\tilde{\lambda}I&\Xi_{23} &\cdots& \Xi_{2k}\\
      \vdots& &\ddots& \\
      \boldsymbol{0}&\boldsymbol{0}&\cdots & G_{k-1}-\tilde{\lambda}I& \Xi_{k-1}\\
      \boldsymbol{0}&\boldsymbol{0}&\cdots & \boldsymbol{0}& G_k-\tilde{\lambda}I
    \end{bmatrix}, \\
    &=\det\left(G_1-\tilde{\lambda}I\right)\cdot \det\left(G_2-\tilde{\lambda}I\right) \cdots \det\left(G_k-\tilde{\lambda}I\right).
  \end{aligned}
\end{equation}
Therefore, we bound the spectral radius of each diagonal block to guarantee the overall stability; thus, for $\det\left(G_j-\tilde{\lambda}I\right)=0,\, j=1,\,\cdots,\,k$, expression~\eqref{eq:stab} is valid. Similarly, defining  $\Lambda^j_{lm}$ as the $lm$ component of $G_j$ allows us to bound the spectral radius of $\det\left(\Lambda^k_{11}-\tilde{\lambda}I\right)=0$ by:
\begin{equation}\label{eq:ineqho}
  \gamma_k\theta (1-2\alpha_{f})\leq 2\alpha_k.
\end{equation} 
Furthermore,  $\det\left(\Lambda^k_{22}-\tilde{\lambda}I-\Lambda^k_{21}\left(\Lambda^k_{11}-\tilde{\lambda}I\right)^{-1}\Lambda^k_{12}\right)=0$ results in:
\begin{equation}\label{eq:eqho}
  \left(1-{\gamma_k\theta}\theta_k\tilde{\lambda}\right)\cdot \left(1-(1+\alpha_f\gamma_k\theta)\theta_k\tilde{\lambda}\right)+\left(1-{\theta}\theta_k\right)\cdot  \left(1-(\gamma_k+\alpha_f\gamma_k\theta)\theta_k\right)=0.
\end{equation} 
Therefore, to satisfy~\eqref{eq:ineqho} and bound the spectral radius in~\eqref{eq:eqho}, we impose the following (for details, see the analysis in the previous section):
\begin{equation}\label{eq:cond}
  \alpha_k\geq \alpha_{f}\geq\dfrac{1}{2}. 
\end{equation}
For the other diagonal blocks, $ j=1,\,\cdots,\,k-1$, the spectral radius of $\det\left(\Lambda^j_{11}-\tilde{\lambda}I\right)=0$, is already bounded. Besides, the equations $\det\left(\Lambda^j_{22}-\tilde{\lambda}I-\Lambda^j_{21}\left(\Lambda^j_{11}-\tilde{\lambda}I\right)^{-1}\Lambda^j_{12}\right)=0$ enforce:
\begin{equation}
  \alpha_j\geq1,\qquad j=1,\cdots,k-1.
\end{equation}
	
To control the numerical dissipation, we let $\theta \to \infty$ and obtain the eigenvalues of the amplification matrix $\eqref{eq:HOampblock}$ as:
\begin{equation}\label{eq:eigen}
  \begin{aligned}
    \lambda_{2j-1}&=0,\qquad \quad  &&\lambda_{2j}=\frac{\gamma_j-1}{\gamma_j},\qquad j=1,\cdots,k-1,\\
    \quad\lambda_{2k-1}&=\frac{\alpha_f-1}{\alpha_f},\qquad \quad &&\lambda_{2k}=\frac{\gamma_k-1}{\gamma_k}.
  \end{aligned}
\end{equation}
Then, we set $\lambda_{2j}=\rho^\infty_j$ and $\lambda_{2k-1}=\lambda_{2k}=\rho^\infty_k$ and find corresponding expressions for $\alpha_j,\,\alpha_k,\,\alpha_f$ as:
\begin{equation}
  \begin{aligned}
    \alpha_j&=\frac{1}{2}\left(\frac{3+\rho^\infty_1}{1+\rho^\infty_1}\right),\qquad j=1,\cdots,k-1,\\
    \alpha_2&=\frac{1}{2}\left(\frac{3-\rho^\infty_2}{1+\rho^\infty_2}\right),\\
    \alpha_f&=\frac{1}{1+\rho^\infty_2}.
  \end{aligned}
\end{equation}
As for the third-order method, choosing $0\leq \rho^\infty_j,\rho^\infty_k\leq1 $, one controls the dissipation in the high-frequency range while minimizing the dissipation in the low-frequency ones. 
	
\begin{remark}
  Setting $\rho^\infty_1 =\rho^\infty_2 =\cdots =\rho^\infty_k =\rho^\infty$, allows us to have a one-parameter family of methods with high accuracy. Additionally, the spectral radius of the system approaches to $\rho^\infty$ in the high-frequency regions.   
\end{remark} 

Figure~\ref{fig:spec} presents numerical evidence; the method's spectral behaviour for $k\geq2$ is independent of the accuracy order. Furthermore, in comparison with the second-order generalized-$\alpha$ method, our generalization improves the spectral behaviour in the mid-frequency regions (e.g., compare the spectral radius for $\rho^\infty=0.5$). Our method prevents extra damping in these regions with moderate frequency in the second-order generalized-$\alpha$ by approaching the spectral radius to zero for $\rho^\infty=0.5$.

\begin{figure}[!ht]	
  \subfigure[Second-order method ]{\centering\includegraphics[width=7.15cm]{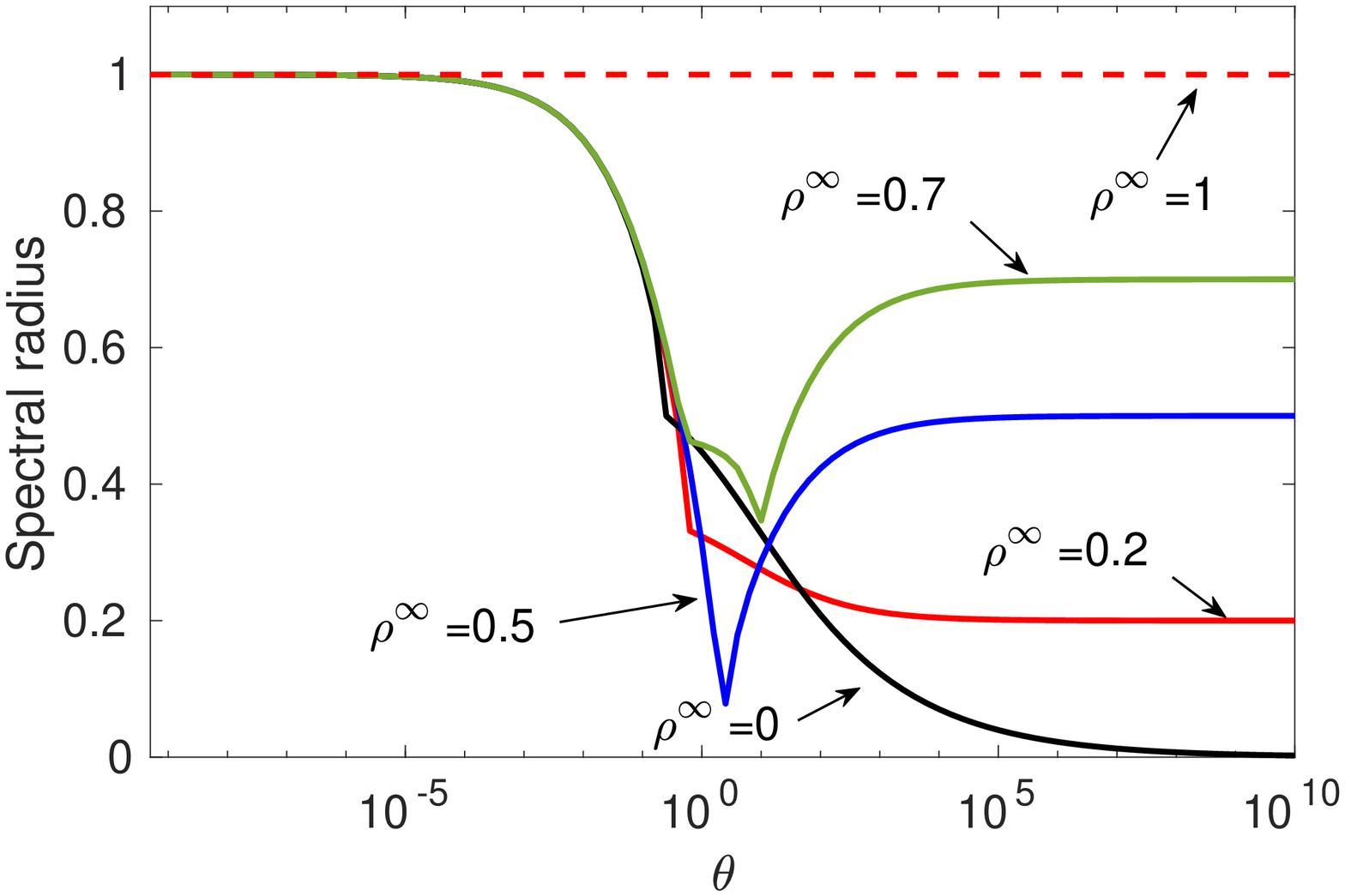} }
  \subfigure[Third-order method ]{\centering\includegraphics[width=7.15cm]{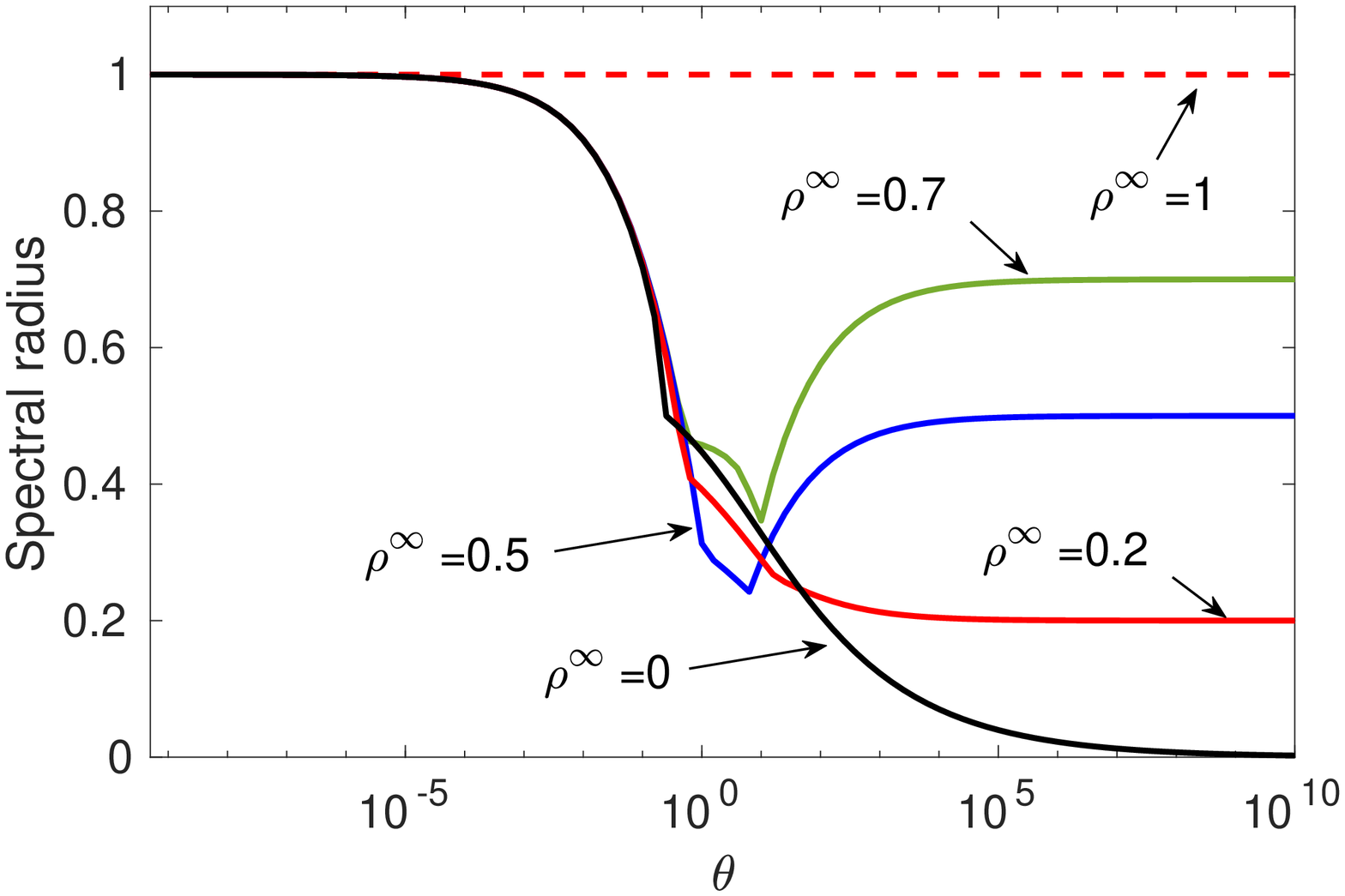} }
  \subfigure[Fifth-order method ]{\centering\includegraphics[width=7.15cm]{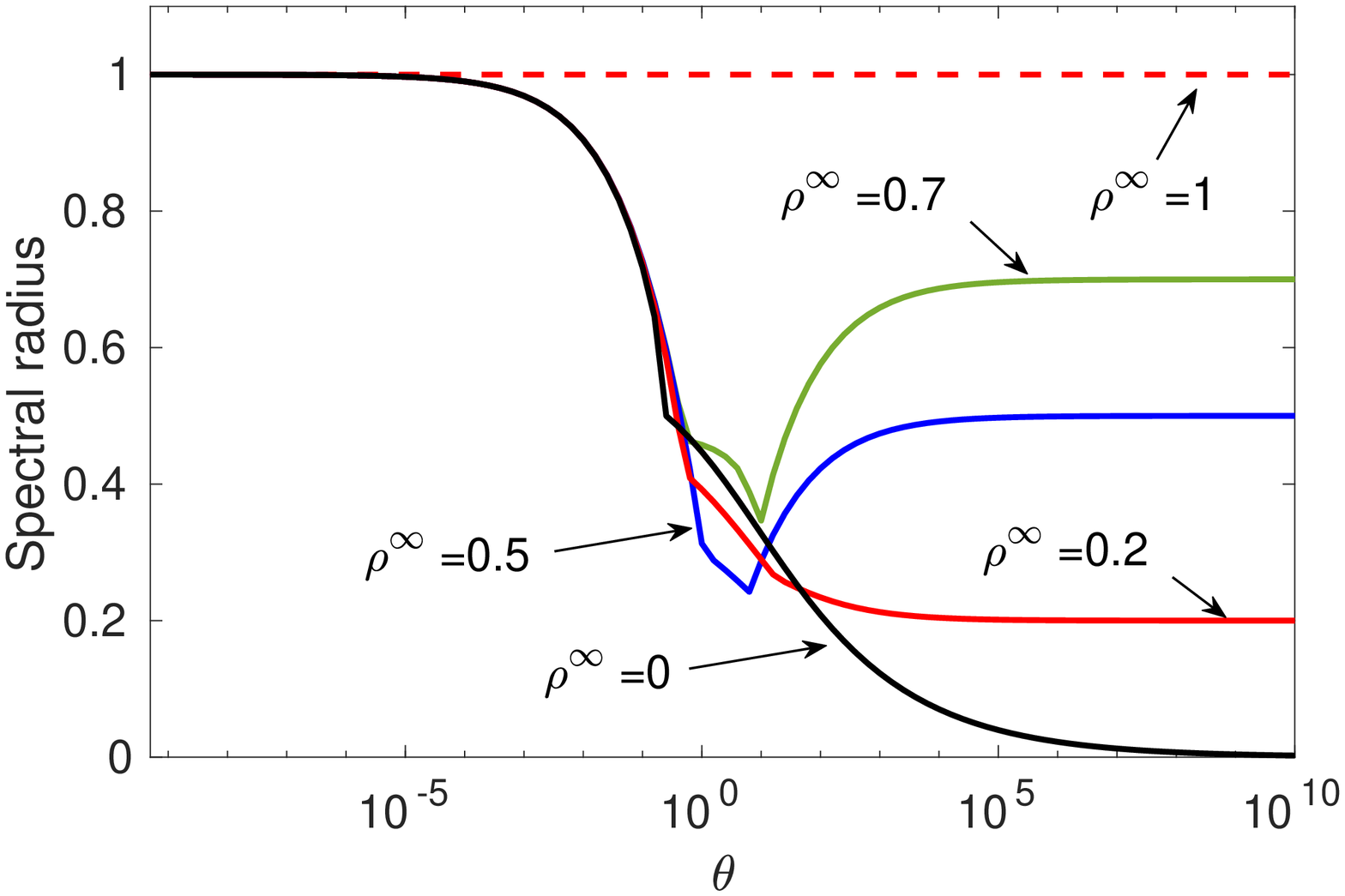} }
  \subfigure[Sixth-order method ]{\centering\includegraphics[width=7.15cm]{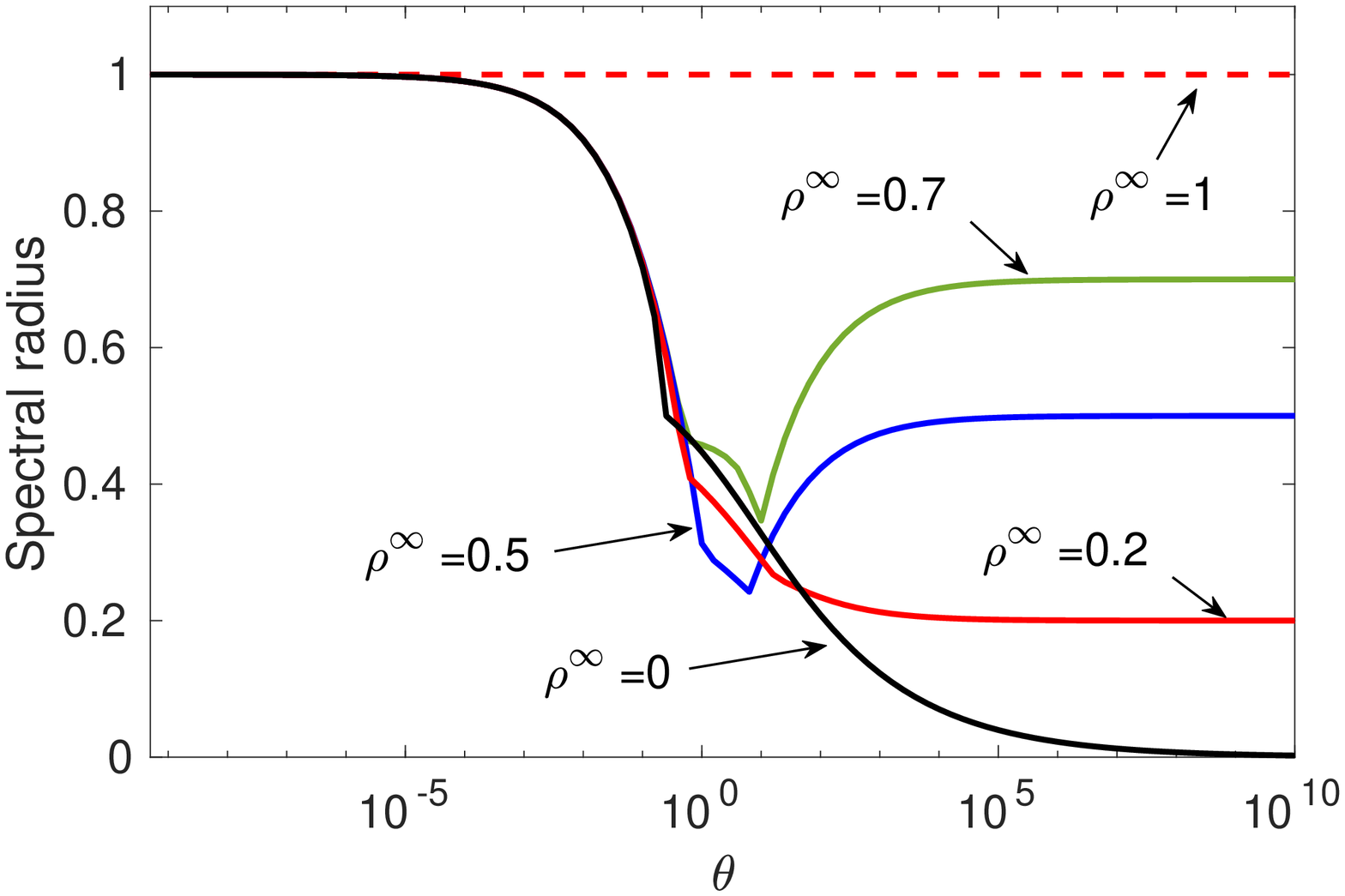} }
  \caption{{Spectral radius of our one-parameter family of methods ($\rho^\infty_1=\rho^\infty_2=\cdots=\rho^\infty_k=\rho^\infty$).}}
  \label{fig:spec}
\end{figure}

\subsection{A-Stability of the method}

We now investigate our method to solve stiff systems with complex entries (i.e., advection problems). The second Dahlquist barrier states that the stable region of a multistep method for a stiff equation shrinks for accuracy orders higher than two (see,~\cite{ hairer2010solving}). Our approach delivers the amplification matrix of~\eqref{eq:HOampblock}, which requires solving $k$ systems that are form identical to the second-order generalized-$\alpha$ method. Each block decouples from the others; therefore, their eigenvalues are independent as well. Thus, we solve $k$ independent systems that lead to a high-order method with an invariant stability region. While, the analysis in~\ref{sec:stab} supports our claims, herein, we consider a problem with complex eigenvalues, $\lambda_\theta \in \mathbb{C}$. Similarly, we consider the amplification matrix~\eqref{eq:HOampblock} to obtain the region of stability as:
\begin{equation}\label{eq:comp}
  \left\{\theta \in \mathbb{C}:\left| \frac{\alpha_k+(\alpha_f-1) \gamma_k \theta}{\alpha_k+\alpha_f \gamma_k \theta}\right|\leq 1,\,\, \left| \frac{\alpha_j }{\alpha_j+ \gamma_j \theta}\right|\leq 1  \right\} ,\qquad j=1,\cdots,k-1.
\end{equation}
Considering~\eqref{eq:cond}, one can show that imposing $Re(\theta)\geq0$ is sufficient to satisfy~\eqref{eq:comp}, which proves that our method is A-stable. 

\begin{remark}
  Setting $k=1$ and $\rho^\infty=1$, in~\eqref{eq:comp} defines a method with the trapezoidal method's stability region, which has a second-order of accuracy and A-stability.
\end{remark}

\begin{theorem}{L-stability:}
  The method introduced in~\eqref{eq:ho}-\eqref{eq:ho3} shows high-order L-stability for $\rho_\infty=0$. 
  \begin{proof}
    Given that the method is L-stable (following this section's analysis), recall that $\tilde{\lambda}$ is the eigenvalue of the amplification matrix~\eqref{eq:HOampblock}, thus, we only require to prove~\cite{ hairer2010solving}:
    \begin{equation}
      \theta \to \pm\infty \implies \tilde{\lambda}\to 0.
    \end{equation}
    For $\theta \to +\infty$, we already show the spectral behavior in~\eqref{eq:eigen}. For $\theta \to -\infty$, we can show that we obtain similar eigenvalues. Therefore,  redefining the parameters by setting  $\tilde{\lambda}=\rho^\infty$, completes our proof. 
  \end{proof}
\end{theorem}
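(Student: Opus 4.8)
The plan is to invoke the standard characterization that a one-step method is L-stable precisely when it is A-stable and, in addition, the spectral radius of its amplification matrix vanishes as the stiffness parameter tends to infinity. Since A-stability has already been established in the preceding subsection through the region~\eqref{eq:comp} together with the parameter constraints~\eqref{eq:cond}, the entire burden of the proof reduces to showing that $\tilde{\lambda}\to 0$ in the limit $\theta\to\pm\infty$ when $\rho^\infty=0$.

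First I would exploit the block upper-triangular structure of $G$ in~\eqref{eq:HOampblock}: because the spectrum of a block-triangular matrix is the union of the spectra of its diagonal blocks, it suffices to control the eigenvalues of each $2\times 2$ block $G_j$ ($j=1,\dots,k-1$) and $G_k$ separately. For each block I would pass to the limit in the entries of~\eqref{eq:GHO2} (and the analogous $G_j$), using $\theta_j=(\alpha_j+\gamma_j\theta)^{-1}$ and $\theta_k=(\alpha_k+\alpha_f\gamma_k\theta)^{-1}$. The key observation is that, since $\theta$ enters only through ratios such as $\theta/(\alpha_j+\gamma_j\theta)$ and $\alpha_j/(\alpha_j+\gamma_j\theta)$, the limiting block is the same matrix whether $\theta\to+\infty$ or $\theta\to-\infty$; this is exactly what licenses the two-sided statement $\theta\to\pm\infty$. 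Reading off the eigenvalues of these limiting (lower-triangular) blocks recovers precisely the values already tabulated in~\eqref{eq:eigen}, namely $0$ and $(\gamma_j-1)/\gamma_j$ for the upper blocks and $(\alpha_f-1)/\alpha_f$, $(\gamma_k-1)/\gamma_k$ for $G_k$.

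Finally I would substitute the parameter values dictated by $\rho^\infty=0$. Setting $\rho^\infty_j=\rho^\infty_k=0$ in the defining relations for $\alpha_j,\alpha_k,\alpha_f$ gives $\alpha_j=\alpha_k=3/2$ and $\alpha_f=1$, whence $\gamma_j=\gamma_k=1$ by~\eqref{eq:3ovHo}; consequently every nonzero limiting eigenvalue $(\gamma_j-1)/\gamma_j$, $(\alpha_f-1)/\alpha_f$, $(\gamma_k-1)/\gamma_k$ collapses to zero. Thus all eigenvalues of $G$ tend to $0$, the spectral radius tends to $0$, and combined with the already-proven A-stability this yields L-stability.

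The step I expect to be the main obstacle is making the $\theta\to-\infty$ limit rigorous and reconciling it with the sign conventions of the problem: for the parabolic test equation the physically relevant parameter $\theta=\tau\lambda_\theta$ is nonnegative, so only $\theta\to+\infty$ arises directly, and one must argue from the rational dependence of each $G_j$ on $\theta$ that extending to $|\theta|\to\infty$ throughout the A-stable sector yields the same null spectral radius. A secondary subtlety is that at $\rho^\infty=0$ the limiting blocks are nilpotent rather than identically zero, so the conclusion genuinely rests on the spectral-radius characterization of L-stability rather than on the amplification matrix itself converging to the zero matrix.
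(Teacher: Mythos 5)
Your proposal is correct and follows essentially the same route as the paper's proof: both reduce L-stability to the already-established A-stability (via~\eqref{eq:comp} and~\eqref{eq:cond}) plus vanishing of the limiting spectral radius, read the limiting eigenvalues block-by-block as tabulated in~\eqref{eq:eigen}, and observe that $\rho^\infty=0$ forces $\alpha_f=1$ and $\gamma_j=\gamma_k=1$ so every limit eigenvalue collapses to zero. Your write-up is merely more careful than the paper's terse argument---justifying the two-sided limit $\theta\to\pm\infty$ via the rational dependence of the blocks on $\theta$ and noting that the limiting blocks are nilpotent rather than zero---but these are refinements of the same argument, not a different approach.
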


Figure~\ref{fig:a-stab} shows that the accuracy order and the stability region are independent; the figure shows the boundedness of the system's eigenvalues for a problem with complex eigenvalues. We see that the system's spectral radius behavior is similar for the second, third, and fifth-order accuracy orders. 

\begin{figure}[h!]
  \subfigure[Second order, $\rho^\infty=0$  ]{\centering
    \includegraphics[width=0.32\textwidth]{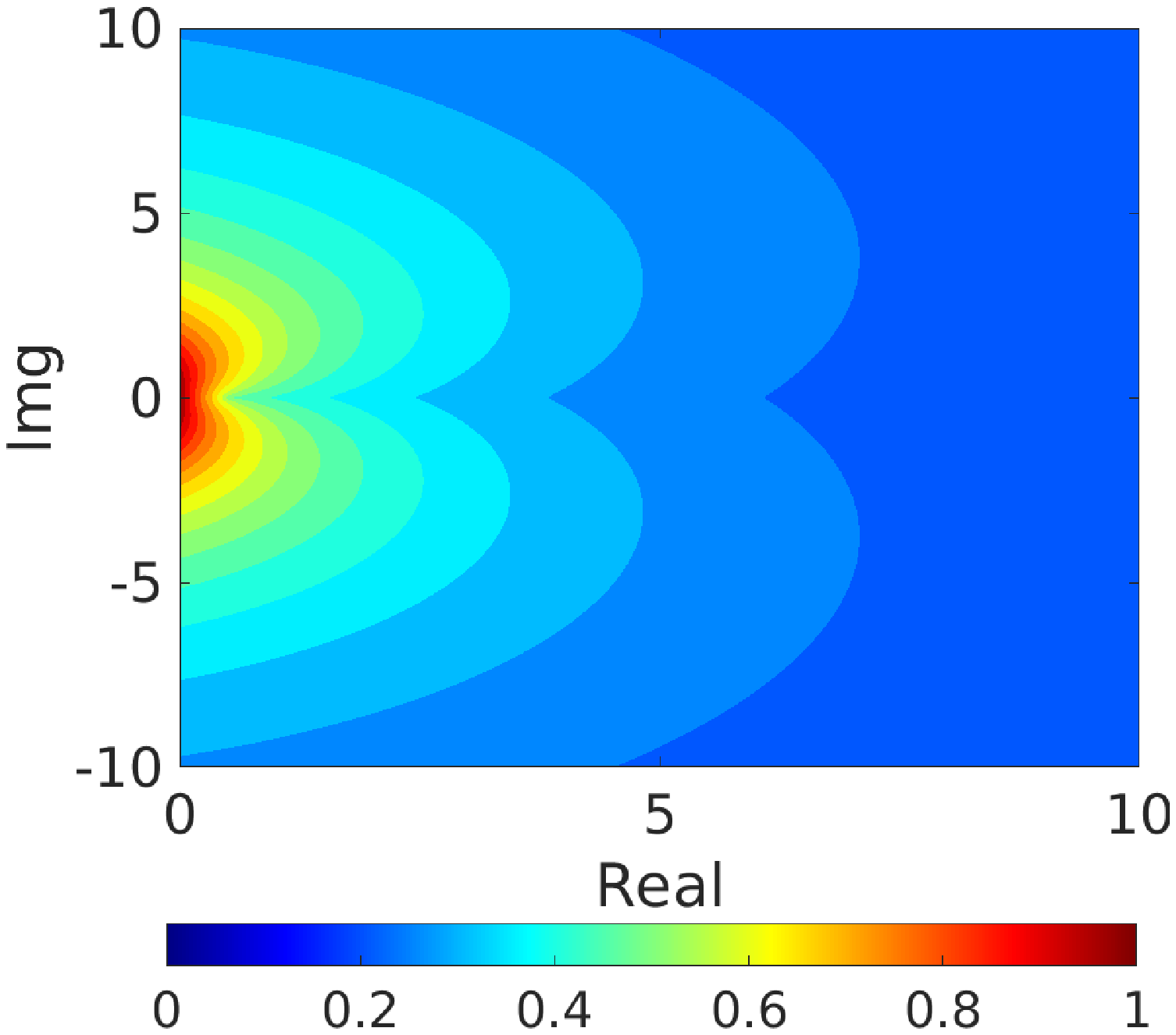}}
  \subfigure[Third order, $\rho^\infty=0$  ]{\centering
    \includegraphics[width=0.32\textwidth]{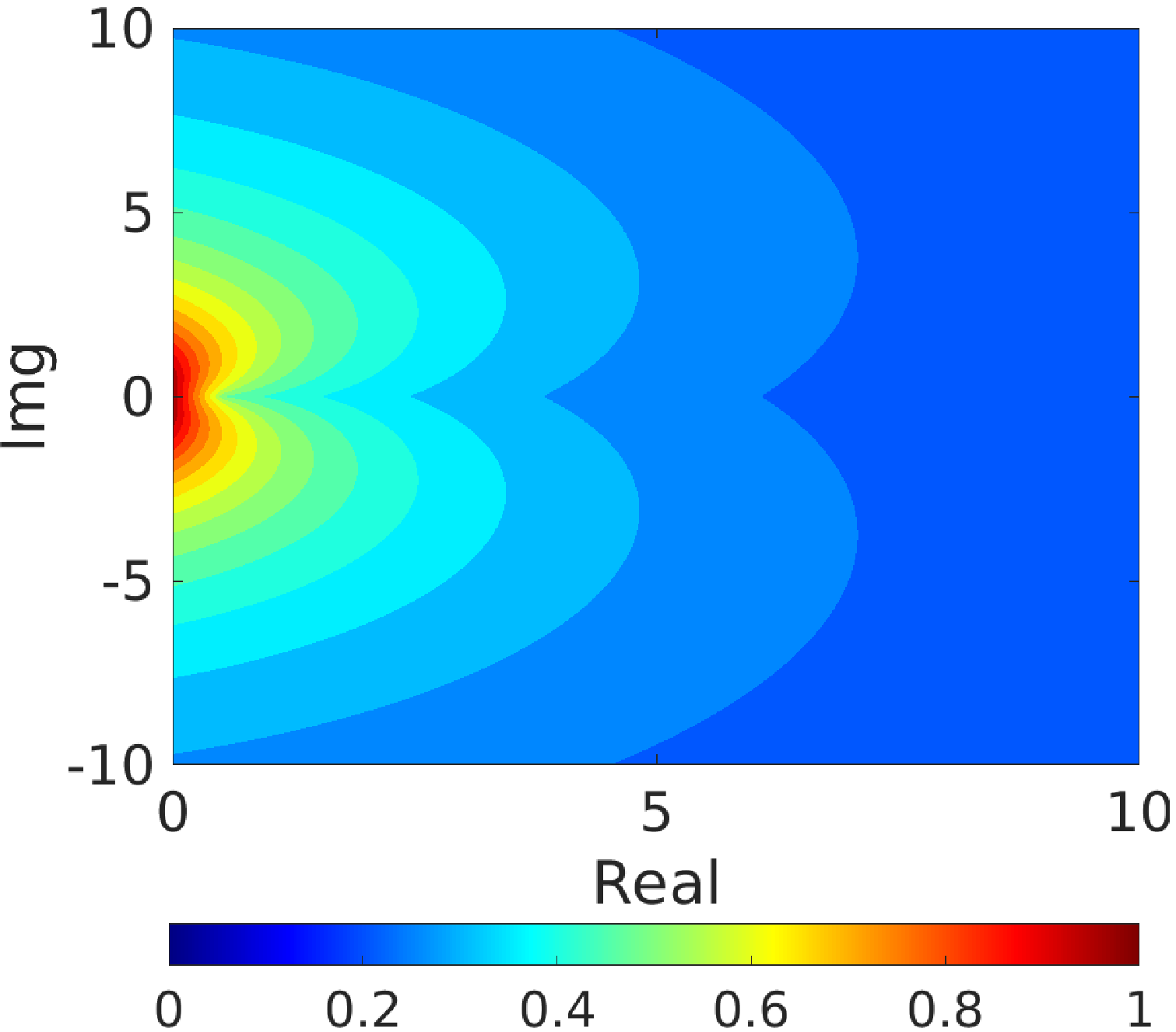}}
  \subfigure[Fifth order, $\rho^\infty=0$]{\centering
    \includegraphics[width=0.32\textwidth]{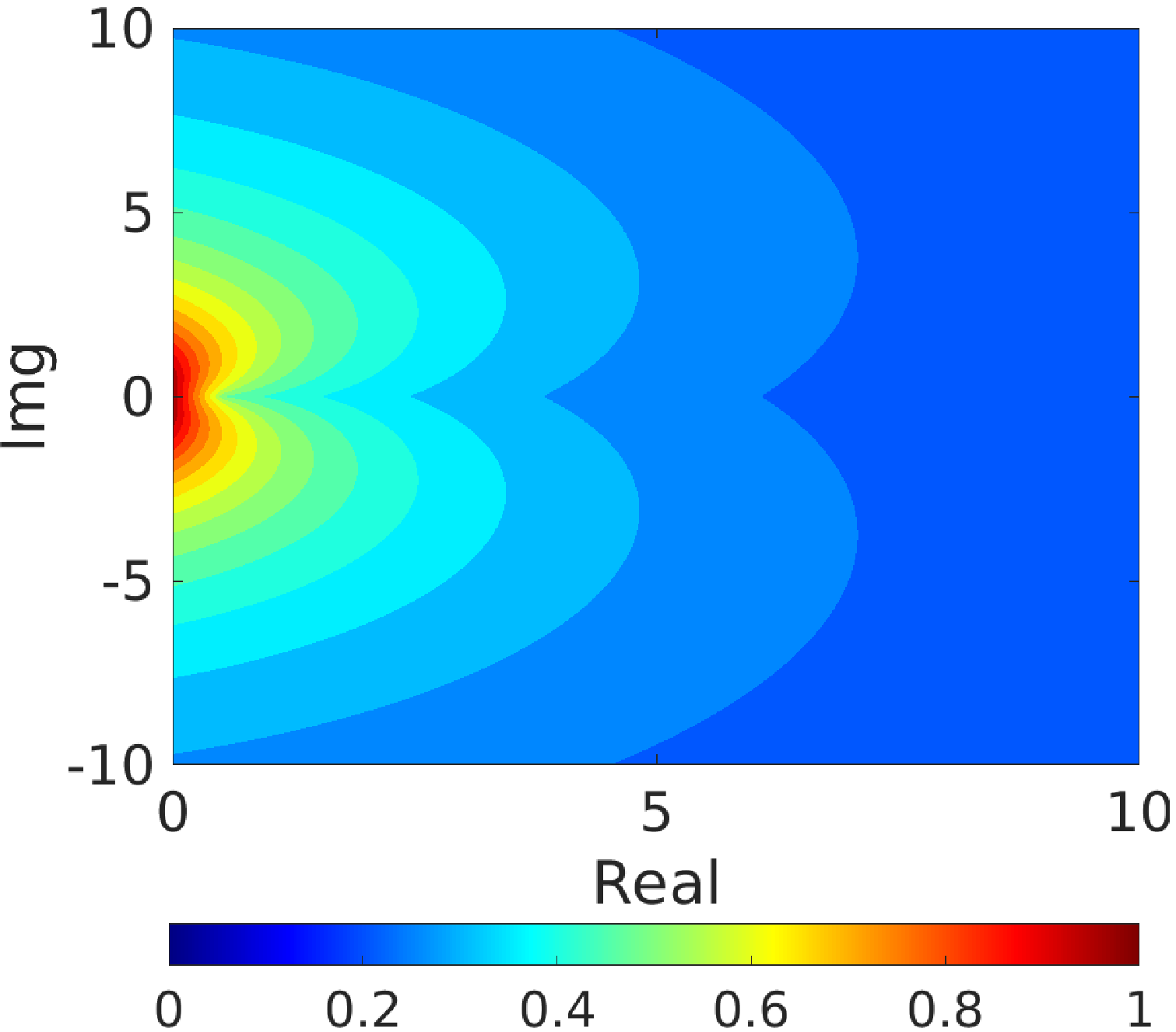}}
  \subfigure[ Second order, $\rho^\infty=0.5$ ]{\centering
    \includegraphics[width=0.325\textwidth]{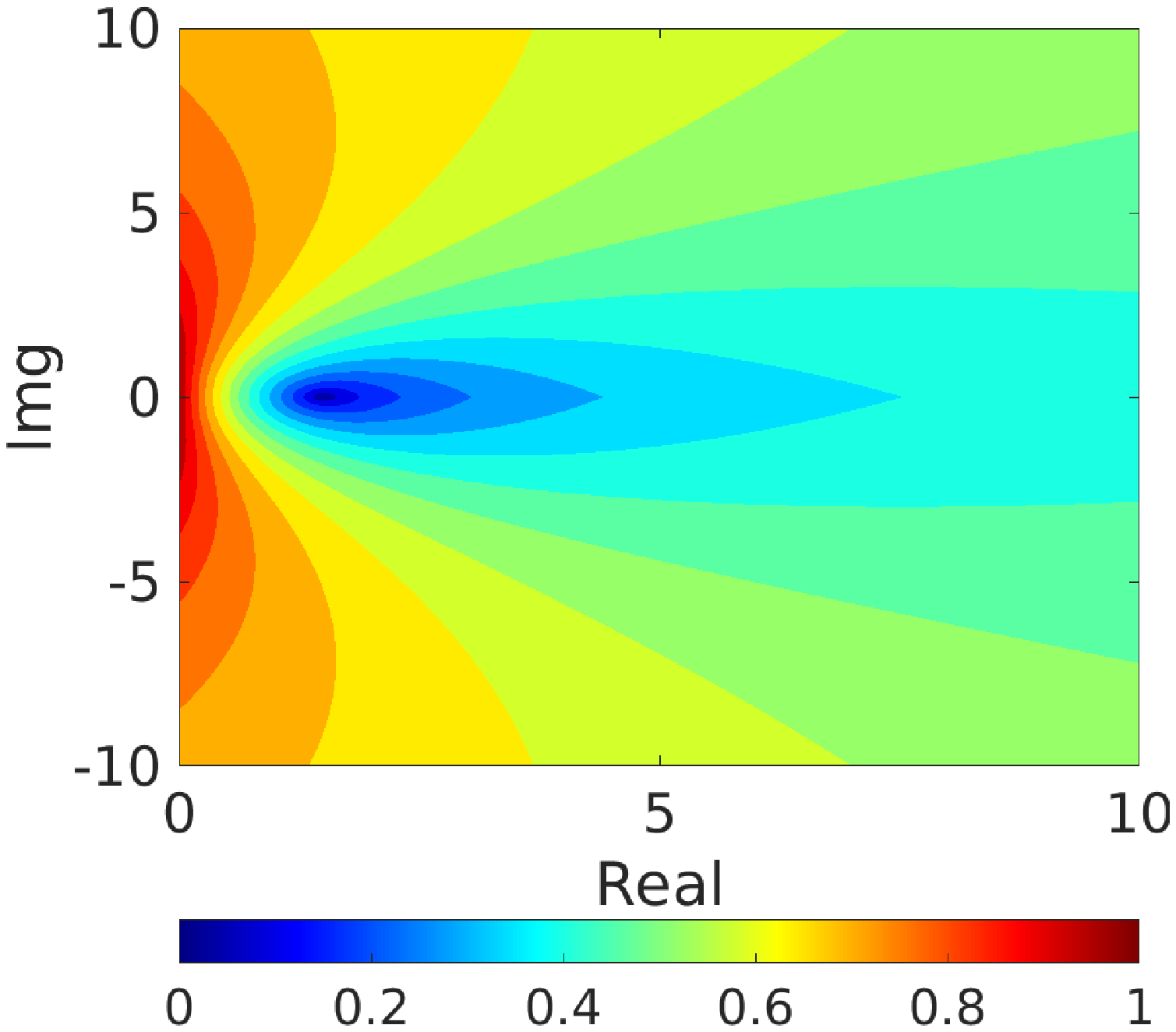}}
  \subfigure[ Third order, $\rho^\infty=0.5$]{\centering
    \includegraphics[width=0.325\textwidth]{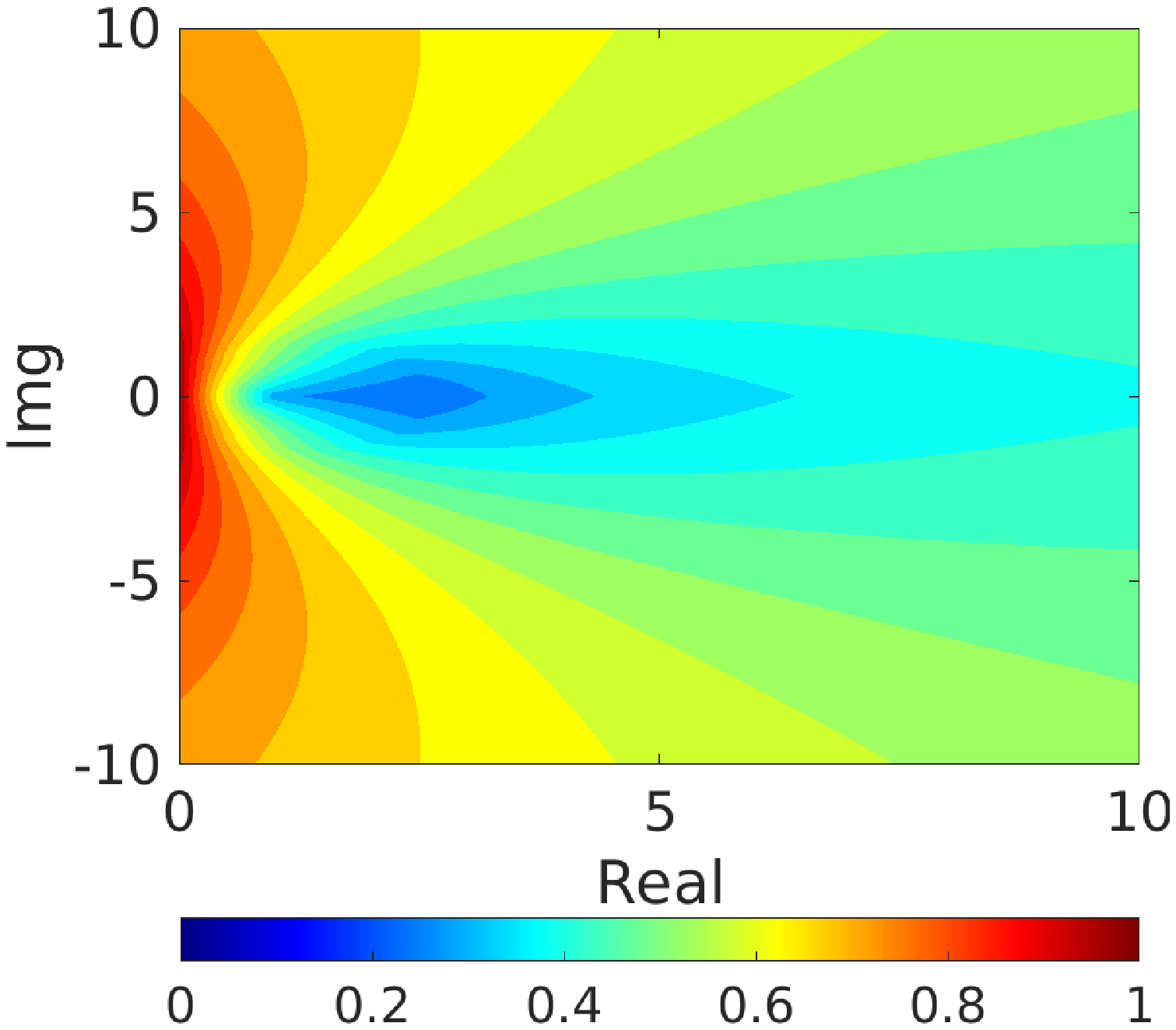}}
  \subfigure[ Fifth order, $\rho^\infty=0.5$]{
    \includegraphics[width=0.325\textwidth]{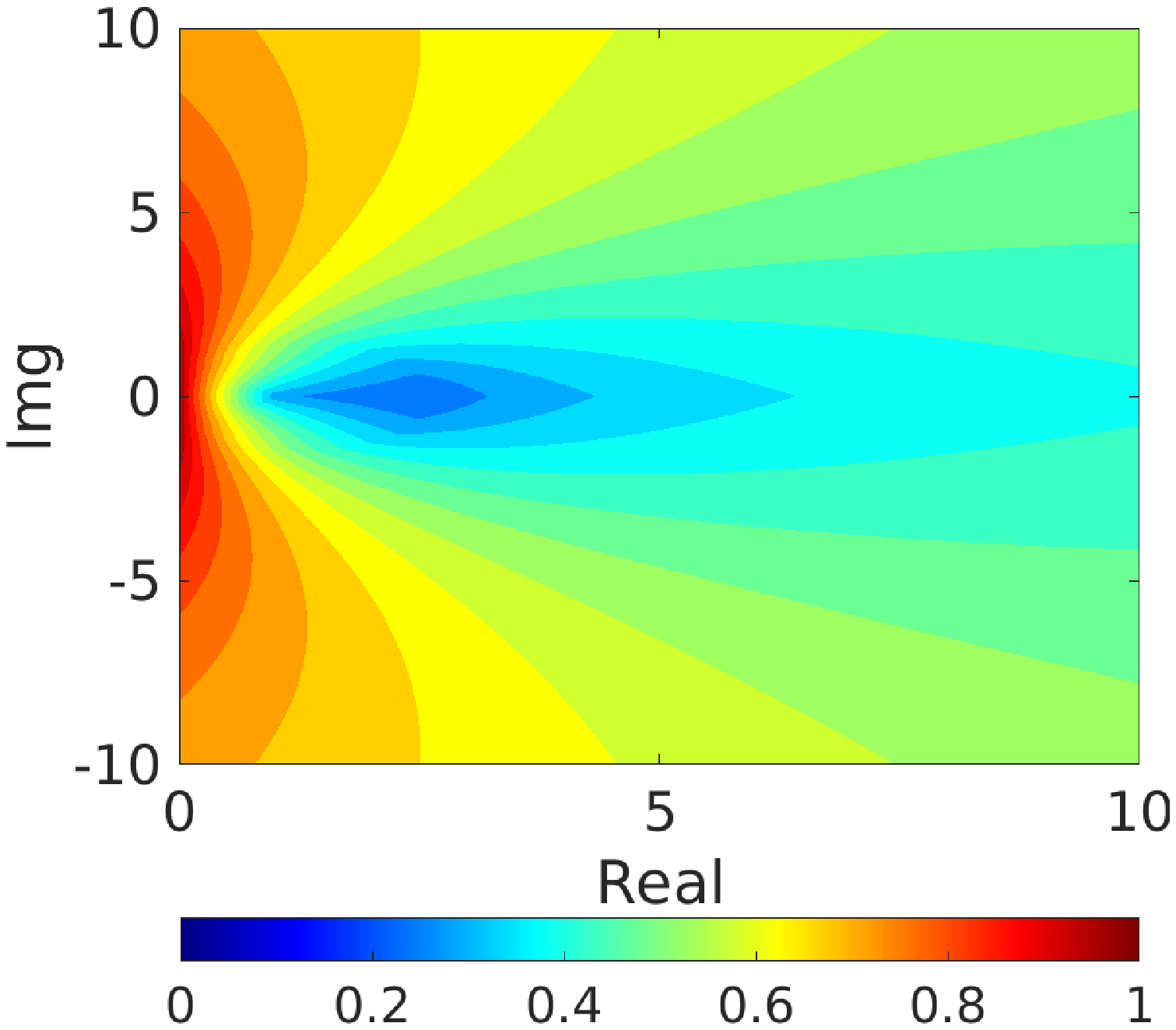}}
  \caption{\label{fig:a-stab} Invariant stability behaviour (system's spectral radius unchanged) by increasing the method's accuracy (from left to right: original generalized-$\alpha$  (second-order), third-order, and fifth-order accuracy).}
\end{figure}

\section{Concluding remarks} \label{sec:Conc}

We derive and analyze a new class of higher-order generalized-$\alpha$ methods for solving parabolic problems that maintain all the attractive features of the original (second-order) generalized-$\alpha$ method. In particular, at each time step, we obtain $(3/2k)^{th}$ and $(3/2k+1/2)^{th}$ order of accuracy in time, respectively, for even and odd $k$, by solving $k$ matrix systems consecutively and implicitly. We then update the other $k$ variables explicitly. We derive a one-parameter family with dissipation control using a user-specified parameter $\rho^\infty$. Our method is A-stable for arbitrarily high accuracy. Furthermore, setting $\rho^\infty=0$, our method shows L-stability behaviour.  

\section*{Acknowledgement}

This publication was also made possible in part by the CSIRO Professorial Chair in Computational Geoscience at Curtin University and the Deep Earth Imaging Enterprise Future Science Platforms of the Commonwealth Scientific Industrial Research Organisation, CSIRO, of Australia. This project has received funding from the European Union's Horizon 2020 research and innovation programme under the Marie Sklodowska-Curie grant agreement No 777778 (MATHROCKS). The Curtin Corrosion Centre and the Curtin Institute for Computation kindly provide ongoing support.
The authors also would like to acknowledge the contribution of an Australian Government Research Training Program Scholarship in supporting this research.
	
\section*{References}
\bibliographystyle{elsarticle-harv}\biboptions{square,sort,comma,numbers}
\bibliography{ref}

\appendix
\section{The coefficients of Cayley--Hamilton Theorem}\label{app:1}

For a given invertible matrix $G$, we can determine the coefficients $c_0=(-1)^n\det(G)$ and other coefficients $c_i, \, i\in \{{1,\cdots,n-1}\}$  in terms complete exponential Bell polynomials $B_l$ as~\cite{ horn1990matrix}:
\begin{equation}
  c_{n-l}=\frac{(-1)^l}{l!}B_l\left(s_1,\,-s_2\,,\,2!s_3\,,\cdots,\,(-1)^{l-1}(l-1)!s_l\,\right),
\end{equation} 
where $s_l$ is the power sum of symmetric polynomials of the eigenvalues:
\begin{equation}\label{eq:s}
  s_l =\sum_{i=1}^{l}\lambda^l_i=tr \left(G^l\right),
\end{equation}
with $tr \left(G^l\right)$ the trace of $G^l$. The $l^{th}$ complete exponential Bell polynomial reads:
\begin{equation}
  B_l(x_1,\cdots,x_l)=\sum_{m=1}^{l}B_{l,m}(x_1,\cdots,x_{l-m+1}),
\end{equation}
and defining the partial exponential Bell polynomials $B_{l,m}$ as:
\begin{equation}
  B_{l,m}(x_1,\cdots,x_{l-m+1})=\sum \frac{l!}{j_1!j_2!\cdots j_{l-m+1}!}\left(x_1\right)^{j_1}\left(\frac{x_1}{2}\right)^{j_2}\left(\frac{x_{l-m+1}}{(l-m+1)!}\right)^{j_{(l-m+1)}}
\end{equation}
where the sum is taken over all sequences $j_1, j_2, j_3, \cdots, j_{l-m+1}$ of non-negative integers such that these two conditions are satisfied: 
\begin{equation}
  \begin{aligned}
    j_1+j_2+\cdots +j_{l-m+1}=m,\\
    j_1+2j_2+\cdots +(l-m+1)j_{l-m+1}=l.
  \end{aligned}
\end{equation}
Following~\cite{ householder2013theory}, it is also possible to determine $B_l$ using the determinant as:
\begin{equation}
  B_l(x_1,\cdots,x_l)=\det 
  \begin{bmatrix}
    x_1&x_2&\dfrac{x_3}{2!}& \cdots &\dfrac{x_l}{(l-1)!}\\
    -1&x_1&x_2& \cdots &\dfrac{x_{l-1}}{(l-2)!}\\
    0&-2&x_1& \cdots &\dfrac{x_{l-2}}{(l-3)!}\\
    0&0&-3& \cdots &\dfrac{x_{l-3}}{(l-4)!}\\
    \vdots&&\vdots&&\vdots\\
    0&0&\cdots &-(l-1)&x_1
  \end{bmatrix}.
\end{equation}
For example, one can readily obtain
\begin{align}
  B_2(x_1,x_2)&=x_1^2+x_2,\label{eq:B2}\\
  B_3(x_1,x_2,x_3)&=x_1^3+3x_1x_2+x_3,\\
  B_4(x_1,x_2,x_3,x_4)&=x_1^4+6x_1^2x_2+4x_1x_3+3x_2^2+x_4,\label{eq:B4}\\
  B_5(x_1,x_2,x_3,x_4,x_5)&=x_1^5+10x_1^3x_2+15x_2^2x_1+10x_1^2x_3\\
              &+10x_3x_2+5x_4x_1+x_5,\\
  B_6(x_1,x_2,x_3,x_4,x_5,x_6)&=x_1^6+15x_1^4x_2+20x_1^3x_3+45x_1^2x_2^2\\
              &+15x_2^3+60x_3x_2x_1+15x_1^2x_4+10x_3^2+15x_4x_2+6x_5x_1+x_6.
\end{align}
Then, we using~\eqref{eq:s} in~\eqref{eq:B2} and multiplying by $U^{n-1}$, we obtain~\eqref{eq:a40}. Similarly, introducing~\eqref{eq:s} into~\eqref{eq:B4} and multiplying by $U^{n-3}$ leads to~\eqref{eq:C4}. 

\end{document}